\tikzset{
  no line/.style={draw=none,
    commutative diagrams/every label/.append style={/tikz/auto=false}},
  from/.style args={#1 to #2}{to path={(#1)--(#2)\tikztonodes}}}%
\tikzset{
    rotate90/.style={anchor=south, rotate=90, inner sep=.5mm}
}%
\numberwithin{equation}{subsection}%
\newtheorem{theorem}{Theorem}[section]
\theoremstyle{plain}
\newtheorem{corollary}[theorem]{Corollary}
\newtheorem{corollary-definition}[theorem]{Corollary-Definition}
\newtheorem{definition}[theorem]{Definition}
\newtheorem{lemma}[theorem]{Lemma}
\newtheorem{proposition}[theorem]{Proposition}
\numberwithin{equation}{section}
\theoremstyle{definition}
\newtheorem{remark}[theorem]{Remark}
\newcommand{\blank}{\hspace{0.04cm} \rule{2.4mm}{.4pt} \hspace{0.04cm} }
\newcommand{\blankd}{\hspace{0.04cm} \rule{1.5mm}{.4pt} \hspace{0.04cm} }
\DeclareMathOperator{\Ker}{\mathrm{Ker}}
\DeclareMathOperator{\Cok}{\mathrm{Coker}}
\DeclareMathOperator{\Hom}{\mathrm{Hom}}
\DeclareMathOperator{\Ext}{\mathrm{Ext}}
\DeclareMathOperator{\Imr}{\mathrm{Im}}
\DeclareMathOperator{\Tr}{\mathrm{Tr}}
\DeclareMathOperator{\ot}{\overset{\rightharpoondown}{\otimes}}
\mathchardef\mhyphen="2D
\newcommand{\injt}{\mathfrak{s}}
\newcommand{\injc}{\mathfrak{q}}
\newcommand{\ModL}{\mathrm{Mod}\mhyphen\Lambda}
\newcommand{\LMod}{\Lambda\mhyphen\mathrm{Mod}}
\newcommand{\ab}{\mathbf{Ab}}
\newcommand{\lra}{\longrightarrow}
\newcommand{\fp}{{\mathrm{fp}}(\ModL, \ab)}
\newcounter{hours}
\newcounter{minutes}
\newcounter{EquationCounter}[subsection]%
\begin{document}
\title[Stable Hom functors and the Bass torsion]{The finite presentation of the stable Hom functors, the Bass torsion, and the cotorsion coradical}

\author{Alex Martsinkovsky}
\address{Mathematics Department\\
Northeastern University\\
Boston, MA 02115, USA}
\email{a.martsinkovsky@northeastern.edu}

\date{\today, \setcounter{hours}{\time/60} \setcounter{minutes}{\time-\value
{hours}*60} \thehours\,h\ \theminutes\,min}
\subjclass[2010]{Primary: 16S90; Secondary: 16D90, 16E30, 18A25}

\thanks{Supported in part by the Shota Rustaveli National Science Foundation of Georgia Grant NFR-18-10849}

\begin{abstract}
We provide necessary and/or sufficient conditions for the stable Hom functors to be finitely presented. When the covariant Hom functor modulo projectives is finitely presented, its defect is isomorphic to the Bass torsion of the fixed argument. When the contravariant Hom functor modulo injectives is finitely presented, its defect is isomorphic to the cotorsion  of the fixed argument. We also give a sufficient condition for the sub-stabilization of the tensor product to be finitely presented. A finite presentation of the tensor product leads to a unexpected application. 
\end{abstract}

\maketitle
\tableofcontents

\section{Introduction}
Throughout this paper, $\Lambda$ is an associative ring with identity, and $\LMod$ is the category of left $\Lambda$-modules. Our main objects of study are certain additive functors $\LMod \to \ab$, where $\ab$ denotes the category of abelian groups. In particular, we want to know when the functors Hom modulo projectives or injectives are finitely presented. More broadly, we want to know when a stabilization of an additive functor is finitely presented.

Recall that a functor is said to be \textit{finitely presented} if it is the cokernel of a natural transformation between representable functors.\footnote{The term ``cokernel'' (like most of the algebraic nomenclature in relation to functors) is understood here in the componentwise sense.} The heuristics is based on the observation that representable functors have properties similar to those of finitely generated projective modules. The fundamental role of such functors was demonstrated in M.~Auslander's seminal paper~\cite{Aus66}. In short, such categories are abelian, they have global dimension~2 (or, only over semisimple rings, 0), and the category of modules can be recovered as a suitable Serre quotient of the functor category. Later, it was shown by R.~Gentle~\cite{Gen91} that the category of finitely presented functors is both complete and cocomplete.

The existence of a finite presentation of an additive functor can be informally thought of as a ``coordinatization'' of the functor, which is analogous to the situation with finitely presented modules: any finitely presented module is completely determined by a matrix. In the functor category, the ``matrix'' is just a module homomorphism (by the Yoneda lemma).

Of special significance are the so-called \textit{stable} functors. These are additive functors defined on the stable categories, i.e., the categories obtained from $\LMod$ by ``modding out" the homomorphisms factoring through projectives or, respectively, injectives. Stable functors are identified with the additive functors on $\LMod$ which vanish on projective or, respectively, injective objects. Examples abound: all higher Ext and Tor functors or, more generally, all higher satellites are such. In fact, given any additive \textit{covariant} functor, one may pass to the largest subfunctor vanishing on injectives or the largest quotient functor vanishing on projectives, which provides an unlimited pool of examples. The results of these procedures will be referred to as the sub-stabilization and, respectively, quot-stabilization of the functor.\footnote{The term ``sub-stabilization'' introduced here was originally known under the name of ``injective stabilization'', and the term ``quot-stabilization'' was known as the ``projective stabilization''. We have opted to eliminate the references to the terms ``injective'' and ``projective'' to avoid the misleading allusions to the namesake resolutions. Indeed, for contravariant functors, the choice of the resolutions is reversed, see below.} A similar construction works for \textit{contravariant} functors by interchanging the prefixes ``sub'' and ``quot'' in the previous construction. Thus defined functors ``measure" the deviation of the functor from its zeroth left- or, respectively, right-derived functor. Clearly, all stable functors arise this way. A systematic study of the related phenomena, known as stable module theory, was initiated in~\cite{AusBr69}. For recent developments, see~\cite{MR-1, MR-2, MR-3}.

As we already mentioned, when an additive functor is finitely presented, one can reduce, to some extent, the study of the functor to the study of modules. Given the ubiquity of stable functors, in this paper we investigate the question of finite presentation of stable univariate Hom functors, of which there are four, corresponding to the choices of the variance of the functor and the projectively or injectively stable category. The established notation is to use overline to denote Hom modulo injectives and underline for Hom modulo projectives. Two of such functors,  $(\underline{A, \blank})$ and $(\overline{\blank, A})$, are actually the quot-stabilizations of the corresponding Hom functors; they may or may not be finitely presented. The other two functors, $(\overline{A, \blank})$ and $(\underline{\blank, A})$
are always finitely presented.\footnote{We emphasize that these two functors are not the stabilizations of the corresponding Hom functors.} There are also the two sub-stabilizations of the univariate Hom functors, but they are always zero (since Hom preserves monomorphisms), and we don't mention them anymore.

Here is the plan of the paper. In Section 2, we briefly recall basic facts about finitely presented functors and the category of such functors. 

In Section~3, we examine the four stable Hom functors and provide necessary and/or sufficient conditions for such functors to be finitely presented. It is in this section that the Bass torsion comes to the fore. Parallel to that, we also see the cotorsion coradical~$\injc$, originally introduced in~\cite{MR-2}. 

In Section~4, we deal with the finite presentation of the tensor product and some closely related functors. In particular, we give a sufficient condition guaranteeing a finite presentation of the sub-stabilization of the tensor product. In fact, we show that the sub-stabilization of any finitely presented covariant functor is finitely presented. We also show that over a noetherian ring with a zero socle (on the other side) all nonzero injectives are infinitely generated.

For the module-theoretic terminology used here, see~\cite{AF92}.

\section{The category of finitely presented functors}

In this section we recall the definition and basic properties of finitely presented functors and of the category they form. For more details, see~\cite{Aus66}.

\begin{definition}
 A functor $F$ is said to be finitely generated if there is a module $X$ and a natural transformation $\alpha : (X, \blank) \to F$ which is epic on each component.\footnote{The symbol $(X, \blank)$ denotes $\Hom(X, \blank)$.}
\end{definition}

\begin{proposition}\label{P:set}
 Let $F$ and $G$ be functors. If $F$ is finitely generated, then the class of all natural transformations $g : F \to G$, denoted by $\mathrm{Nat}(F, G)$, is a set. 
\end{proposition}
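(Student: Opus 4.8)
The plan is to use the Yoneda lemma to replace natural transformations out of a representable functor by elements (actually, a set) of abelian groups, and then push this through the finite generation hypothesis. First I would recall that for any functor $G$ and any module $X$, the Yoneda lemma gives a bijection $\mathrm{Nat}((X,\blank), G) \cong G(X)$, natural in both variables; in particular $\mathrm{Nat}((X,\blank), G)$ is a set, being in bijection with the underlying set of an abelian group. So the representable case is immediate.

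Next I would use the hypothesis that $F$ is finitely generated: choose a module $X$ and a natural transformation $\alpha\colon (X,\blank) \to F$ that is epic on every component. Given any $g\colon F \to G$, the composite $g\alpha\colon (X,\blank)\to G$ is a natural transformation out of a representable functor, hence corresponds to a unique element of $G(X)$. This assignment $g \mapsto g\alpha$ defines a map $\mathrm{Nat}(F,G) \to \mathrm{Nat}((X,\blank),G) \cong G(X)$. The key point is that this map is \emph{injective}: if $g\alpha = g'\alpha$, then for every module $M$ the component maps $g_M$ and $g'_M$ agree after precomposition with the epimorphism $\alpha_M\colon (X,M) \to F(M)$, and since $\alpha_M$ is epic (surjective as a map of abelian groups) this forces $g_M = g'_M$; as this holds for all $M$, we get $g = g'$. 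Therefore $\mathrm{Nat}(F,G)$ injects into a set, and so is itself a set.

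The only genuine subtlety — and the thing one must be slightly careful about — is the set-theoretic bookkeeping: a priori $\mathrm{Nat}(F,G)$ is only a \emph{class}, since $F$ and $G$ are functors on a possibly large category, so one cannot freely form power sets or subsets. The argument above sidesteps this by exhibiting an injective class-function from $\mathrm{Nat}(F,G)$ into an honest set $G(X)$; a class that injects into a set is a set. This is the main (and essentially only) obstacle, and it is dispatched by the injectivity of $g \mapsto g\alpha$, which in turn rests on nothing more than the surjectivity of the components of $\alpha$. No further structure of $\LMod$ or $\ab$ is needed.
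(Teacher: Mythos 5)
Your proof is correct and follows exactly the paper's argument: postcompose with $\alpha$ to embed $\mathrm{Nat}(F,G)$ into $\mathrm{Nat}((X,\blank),G)\cong G(X)$ via Yoneda, using surjectivity of the components $\alpha_M$ for injectivity. (In fact your write-up fixes a small typo in the paper, which writes $\mathrm{Nat}((X,\blank),F)\cong F(X)$ where $G$ is meant.)
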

\begin{proof}
 Let $\alpha : (X, \blank) \to F$ be a componentwise epimorphism. Then, the assignment $g \mapsto g\alpha$ yields an injection $\mathrm{Nat}(F, G) \to\mathrm{Nat}((X,\blank), F)$. By the Yoneda lemma, the latter is isomorphic to $F(X)$, which is a set. Hence $\mathrm{Nat}(F, G)$ is a set, too.
\end{proof}

\begin{definition}
 A functor $F$ is said to be finitely presented if there are modules $X$ and $Y$ and a homomorphism $f : X \to Y$ such that the sequence
\begin{equation}\label{Eq:fp}
 (Y,\blank) \overset{(f, \blankd)}\lra (X, \blank) \lra F \lra 0
\end{equation}
 is componentwise exact.
\end{definition}

Thus every finitely presented functor is finitely generated and, as a consequence of 
Proposition~\ref{P:set}, the totality of all finitely presented functors and natural transformations between them, denoted here by~$\fp$, is a category.

\begin{proposition}\label{P:fp-ker}
$\fp$ has kernels and cokernels. Both are computed componentwise.
\end{proposition}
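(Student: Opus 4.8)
The plan is to show that the category $\fp$ of finitely presented functors is closed under the formation of componentwise kernels and cokernels. Since $\fp$ is a full subcategory of the (large) abelian category of all additive functors $\LMod \to \ab$, it suffices to prove that when $F, G \in \fp$ and $g : F \to G$ is a natural transformation, the componentwise kernel and componentwise cokernel of $g$ again lie in $\fp$; these will then automatically be the kernel and cokernel of $g$ in $\fp$.

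The cokernel is the easy half. First I would choose finite presentations $(Y, \blank) \to (X, \blank) \to F \to 0$ and $(Y', \blank) \to (X', \blank) \to G \to 0$. Since $(X, \blank) \to F$ is a componentwise epimorphism and composing with $g$ lands in $G$, and since $(X, \blank)$ is projective in the functor category (a consequence of the Yoneda lemma, as $\Hom((X,\blank), G) \cong G(X)$), the composite $(X, \blank) \to G$ lifts along $(X', \blank) \to G$ to some map $(X, \blank) \to (X', \blank)$, i.e. to a module homomorphism $X' \to X$ by Yoneda. One then checks that the coequalizer data assembles so that $\coker g$ is the cokernel of the induced map $(X \oplus Y', \blank) \to (X', \blank)$, which exhibits a finite presentation of $\coker g$; concretely, $\coker g$ is presented by the "stacked" map built from $Y' \to X'$ and the lift $X' \to X$. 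This only uses that representables are projective and the Yoneda identification of natural transformations between representables with module maps.

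The kernel is the real obstacle, because a componentwise kernel of a map between representables is generally \emph{not} representable, so one cannot simply splice presentations. The standard device (going back to Auslander) is to resolve $g : F \to G$ by a map of projective presentations. Concretely, I would first lift $g$ to a commutative diagram with rows the chosen presentations of $F$ and $G$ and vertical maps between the representables; this is possible by projectivity of representables applied twice. This produces a double complex of representables mapping to the two-term complex $F \to G$. Now $\ker g$ fits into the totalization: one forms the mapping-cone-type complex and shows, by a diagram chase with the componentwise long exact sequences (which are available because $\Hom(W, \blank)$ is exact on representables and because kernels/images are computed componentwise in abelian groups), that $\ker g$ receives a componentwise epimorphism from a representable $(Z, \blank)$, and that the kernel of \emph{that} map is again covered by a representable. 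In other words, $\ker g$ has projective dimension at most $2$ as a functor, hence in particular is finitely presented. The key technical point I expect to be delicate is keeping track of exactness while passing through the $3 \times 3$ or cone diagram — one must verify that the relevant connecting maps between representables are themselves induced by honest module homomorphisms (again Yoneda), so that the resulting two-step resolution of $\ker g$ consists of representables and the presenting map is a bona fide module map.

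Finally, once both $\ker g$ and $\coker g$ are shown to lie in $\fp$, I would note that they satisfy the universal properties of kernel and cokernel \emph{in} $\fp$: any natural transformation from an object of $\fp$ that equalizes (resp. coequalizes) $g$ factors uniquely through the componentwise kernel (resp. cokernel), and this factorization is automatically a morphism in $\fp$ since its source and target are finitely presented. Hence $\fp$ has kernels and cokernels, computed componentwise, as claimed. The only real content is the two lifting/resolution arguments; everything else is the transfer of exactness from $\ab$ to the functor category via Yoneda.
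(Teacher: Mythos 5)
Your reduction to showing that $\fp$ is closed under componentwise kernels and cokernels is correct, and your cokernel argument is essentially complete: lifting the composite $(X,\blank)\to F\to G$ through the presentation epimorphism $(X',\blank)\to G$ and stacking the lift with $(Y',\blank)\to(X',\blank)$ exhibits $\coker g$ as the cokernel of a map $(X\oplus Y',\blank)\to(X',\blank)$ which, by Yoneda, is induced by a module homomorphism. (For what it is worth, the paper gives no argument of its own here; it only cites Auslander, so there is no in-text proof to compare against.)

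The kernel half, however, has a genuine gap, and it sits exactly where you flag the difficulty. You assert that ``a componentwise kernel of a map between representables is generally \emph{not} representable.'' This is false, and the correct statement is the engine that makes your own cone argument close: for a module homomorphism $f:X\to Y$, left-exactness of $\Hom(\blank,B)$ applied to $X\xrightarrow{f}Y\to\Cok f\to 0$ shows that the kernel of $(f,\blankd):(Y,\blank)\to(X,\blank)$ is precisely the representable functor $(\Cok f,\blank)$. This is why every finitely presented functor admits a length-$2$ projective resolution by representables (the global-dimension-$2$ fact quoted in the paper's introduction), which is what allows you to extend both presentations to resolutions $0\to(Z,\blank)\to(Y,\blank)\to(X,\blank)\to F\to 0$, lift $g$ to a map of resolutions, and form the cone. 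At the decisive step of your chase you need exactly this lemma again: the kernel of $(X,\blank)\oplus(Y',\blank)\to(X',\blank)$ is representable, say $(W,\blank)$; the incoming differential of the cone then factors through $(W,\blank)$ via a natural transformation between representables, hence via a module map, and $\ker g\cong\coker\bigl((Y\oplus Z',\blank)\to(W,\blank)\bigr)$ is finitely presented. As written, your proof denies the one fact it needs and replaces the crux by the words ``diagram chase,'' so it does not go through; supplying the left-exactness observation repairs it. A minor further point: your two-step covering of $\ker g$ establishes finite presentation, not ``projective dimension at most $2$'' --- the latter requires the second kernel to be representable, not merely covered by a representable (it is, again by the same lemma, but that is an extra statement, not a consequence of what you wrote).
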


\begin{proof}
See~\cite[Proposition~2.1]{Aus66}. For a more direct proof, see~\cite[Proposition~3.1]{Aus82}.
\end{proof}

\begin{proposition}
 $\fp$ is abelian.
\end{proposition}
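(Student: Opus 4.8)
The plan is to verify the axioms of an abelian category one at a time, in each case reducing the statement to the corresponding --- and already known --- fact in $\ab$. The two substantive ingredients are Proposition~\ref{P:set}, which guarantees that $\fp$ is a genuine category (with small hom-classes), and Proposition~\ref{P:fp-ker}, which supplies kernels and cokernels computed componentwise. Everything else is bookkeeping.

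First I would check that $\fp$ is additive. The hom-classes are sets by Proposition~\ref{P:set}, and they become abelian groups under componentwise addition (each $F(M)$ lies in $\ab$), with composition biadditive; so $\fp$ is preadditive. The zero functor is finitely presented --- use $X=Y=0$ in~\eqref{Eq:fp}, noting $(0,\blank)=0$ --- and it is a zero object. Given finite presentations of $F$ and $G$, adding them termwise, using $(X_1\oplus X_2,\blank)\cong(X_1,\blank)\oplus(X_2,\blank)$ and the exactness of finite direct sums in $\ab$, presents $F\oplus G$; hence $\fp$ has finite biproducts and is additive. Together with Proposition~\ref{P:fp-ker}, it then remains only to show that every monomorphism in $\fp$ is the kernel of its cokernel and every epimorphism is the cokernel of its kernel.

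For that step I would first observe that, since kernels and cokernels are formed componentwise and the zero object is the componentwise-zero functor, a morphism of $\fp$ is monic (resp. epic) precisely when each of its components is --- here one uses that in an additive category with kernels a morphism is monic iff its kernel vanishes. Now let $\eta\colon F\to G$ be monic; form its cokernel $c\colon G\to C$ in $\fp$ and then the kernel $k\colon K\to G$ of $c$. Both $C$ and $K$ are finitely presented and computed componentwise, so $K(M)=\Ker(c_M)=\Imr(\eta_M)$ by exactness in $\ab$, and the comparison maps $F(M)\xrightarrow{\ \sim\ }\Imr(\eta_M)=K(M)$ are isomorphisms natural in $M$, assembling into an isomorphism $F\cong K$ compatible with the maps to $G$. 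Thus $\eta=\Ker(\Cok\eta)$; the dual argument handles epimorphisms, and $\fp$ is abelian.

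I do not anticipate a real obstacle, since the one piece of genuine content --- that $\fp$ is closed under taking kernels --- is already granted by Proposition~\ref{P:fp-ker}. The only place that calls for care is the identification of monomorphisms (resp. epimorphisms) in $\fp$ with componentwise monomorphisms (resp. epimorphisms); this is exactly what licenses the transfer of the normality and conormality axioms from $\ab$, and it rests on kernels and cokernels being computed componentwise.
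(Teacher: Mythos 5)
Your proof is correct and follows essentially the same route as the paper: granting Proposition~\ref{P:fp-ker}, the only remaining abelian axiom is verified by reducing it componentwise to $\ab$. The paper phrases that last axiom as ``the canonical coimage-to-image map is an isomorphism'' rather than your equivalent normality/conormality formulation, and dispenses with it in one sentence, but the substance is identical.
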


\begin{proof}
 The canonical morphism from the coimage to image is an isomorphism because it is an isomorphism componentwise.
\end{proof}

\begin{proposition}
 $\fp$ has enough projectives.
\end{proposition}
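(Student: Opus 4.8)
The plan is to show that the representable functors $(X,\blank)$ serve as a generating set of projective objects in $\fp$. First I would verify that each representable functor $(X,\blank)$ is projective in $\fp$: given a componentwise epimorphism $G \twoheadrightarrow H$ of finitely presented functors and a natural transformation $(X,\blank) \to H$, the Yoneda lemma identifies this transformation with an element of $H(X)$; since $G(X) \to H(X)$ is surjective, we may lift that element to $G(X)$, and Yoneda turns this lift back into a natural transformation $(X,\blank) \to G$ covering the original. This is the standard ``Yoneda + componentwise surjectivity'' argument, and it is routine.

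Next I would show that every finitely presented functor $F$ admits an epimorphism from a representable functor. This is immediate from the definition: the presentation \eqref{Eq:fp} exhibits a componentwise exact sequence $(Y,\blank) \to (X,\blank) \to F \to 0$, so $(X,\blank) \to F$ is a componentwise epimorphism, hence an epimorphism in the abelian category $\fp$ (epimorphisms there are computed componentwise by Proposition~\ref{P:fp-ker}). Thus every object of $\fp$ is a quotient of a projective object, which is exactly the statement that $\fp$ has enough projectives.

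The only point requiring a little care — and the main (mild) obstacle — is the closure issue: to speak of projectivity \emph{within} $\fp$ one should know that $(X,\blank)$ actually lies in $\fp$, which it does (take $f$ to be the zero map $0 \to X$, or simply observe $(X,\blank)$ is its own presentation), and that the relevant lifting diagram stays inside $\fp$. Since $\fp$ is a full subcategory of the functor category and the lifting $(X,\blank) \to G$ produced by Yoneda is automatically a natural transformation between finitely presented functors, there is nothing further to check. Hence $\fp$ has enough projectives, with the representable functors forming a projective generating class.
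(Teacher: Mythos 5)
Your proposal is correct and follows the same route as the paper, which simply notes that the representables are the projectives of $\fp$ by the Yoneda lemma; you have filled in the standard details (Yoneda lifting against componentwise epimorphisms, plus the observation that the presentation itself provides a representable cover of each object). Nothing is missing.
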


\begin{proof}
The projectives are precisely the representables, which can be seen by applying the Yoneda lemma.
\end{proof}

\begin{remark}
 The projectives in $\fp$ can also be characterized as the left-exact finitely presented functors. Indeed, the representables are of this form. On the other hand, a finitely presented functor clearly preserves arbitrary products. If it is left-exact, it also preserves finite limits. It follows that it preserves arbitrary limits, and by a theorem of Watts, it must be representable.
 \end{remark}                                                                                                                                                                                                                                                                                                                                                                                                                                                                                                                                                                                                                                                                                                                                                                                                                                                                                                                                                                         
 
 An important invariant of a finitely presented functor is its defect, a notion introduced  (without a name) by Auslander in~\cite[pp.~202 and 209]{Aus66}. We recall it now.
 
\begin{definition}
 Let $F$ be a covariant functor with presentation~\eqref{Eq:fp}. The module
 \[
 w(F) := \Ker f
 \]
 is called the defect of $F$.
\end{definition}

The following known result is an easy consequence of this definition.

\begin{lemma}\label{L:defect-zero}
The defect of a finitely presented functor is zero if and only if the functor vanishes on injectives. \qed
\end{lemma}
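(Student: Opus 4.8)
The plan is to analyze the defect $w(F) = \Ker f$ directly through the presentation~\eqref{Eq:fp} and relate its vanishing to the behavior of $F$ on injective modules. The key is that the defect carries the same information as the functor $F$ evaluated at an injective cogenerator, via left-exactness considerations. First I would recall that for the presentation $(Y,\blank) \overset{(f,\blankd)}\lra (X,\blank) \lra F \lra 0$, evaluating at a module $M$ gives the exact sequence $\Hom(Y,M) \to \Hom(X,M) \to F(M) \to 0$. When $M = I$ is injective, the contravariant functor $\Hom(\blank, I)$ is exact, so applying it to the exact sequence $0 \to \Ker f \to X \to \Imr f \to 0$ together with $0 \to \Imr f \to Y$ shows that $\Hom(Y,I) \to \Hom(X,I)$ has image exactly $\Hom(\Imr f, I) = \{g : X \to I \mid g \text{ kills } \Ker f\}$. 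Hence $F(I) \cong \Hom(\Ker f, I) = \Hom(w(F), I)$.

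From this isomorphism $F(I) \cong \Hom(w(F), I)$, both directions follow. If $w(F) = 0$, then $F(I) \cong \Hom(0, I) = 0$ for every injective $I$, so $F$ vanishes on injectives. Conversely, suppose $F$ vanishes on injectives. Choose an injective cogenerator $I_0$ of $\LMod$ (or, more concretely, embed $w(F)$ into an injective envelope $E(w(F))$). Then $\Hom(w(F), E(w(F))) = 0$ forces $w(F) = 0$, since the identity map $w(F) \to w(F) \hookrightarrow E(w(F))$ would be a nonzero element otherwise. Actually it is cleanest to take $I = E(w(F))$: the inclusion $w(F) \hookrightarrow E(w(F))$ is a nonzero element of $F(E(w(F))) \cong \Hom(w(F), E(w(F)))$ unless $w(F) = 0$.

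The main subtlety to check is the identification $F(I) \cong \Hom(w(F), I)$, specifically that the image of $(f,\blankd)$ evaluated at $I$ is precisely the set of homomorphisms $X \to I$ vanishing on $\Ker f$: this uses that $\Hom(\blank, I)$ sends the epimorphism $X \twoheadrightarrow \Imr f$ to a monomorphism and the monomorphism $\Imr f \hookrightarrow Y$ to an epimorphism, which is exactly the injectivity of $I$. Once this naturality-free computation is in hand, the biconditional is immediate. I do not expect any genuine obstacle here; the statement is essentially a direct unwinding of the definition of defect combined with the exactness properties of $\Hom$ into an injective.
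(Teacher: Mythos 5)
Your proof is correct, and it is essentially the argument the paper intends: the lemma is stated without proof as an ``easy consequence of the definition,'' and the key identification $F(I)\simeq \Hom\bigl(w(F),I\bigr)$ for injective $I$ that you establish is exactly the computation the paper makes explicit later, in the Section~4 lemma showing $R^{0}F\simeq \bigl(w(F),\blank\bigr)$ (where $\rho_F$ is noted to be an isomorphism on injectives). Your use of the injective envelope $E(w(F))$ to get the converse direction is the standard and correct way to finish.
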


The notion of defect for a contravariant functor is defined by reversing the arrows~[idem.]:
\begin{definition}
Let~$F$ be a contravariant functor with presentation 
\[
(\blank, Y) \overset{(\blankd, f)}\lra (\blank, X) \lra F \lra 0.
\]
The module $v(F) := \Cok f$ is called the defect of $F$.
\end{definition}

It is immediate from the definition that $v(F) \simeq F(\Lambda)$. The following lemma is now obvious.

\begin{lemma}
 The defect of a finitely presented contravariant functor is zero if and only if the functor vanishes on the regular module. \qed
\end{lemma}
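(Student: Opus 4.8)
The plan is to reduce the statement to the tautology ``a module is zero iff it is zero,'' by invoking the isomorphism $v(F) \simeq F(\Lambda)$ recorded just above; for completeness I would also include the one-line justification of that isomorphism. Given a contravariant presentation $(\blank, Y) \overset{(\blankd, f)}\lra (\blank, X) \lra F \lra 0$ with $f\colon Y \to X$, I would evaluate it at the regular module~$\Lambda$. Since cokernels in $\fp$ are computed componentwise (Proposition~\ref{P:fp-ker}), this yields an exact sequence of abelian groups
\[
\Hom_\Lambda(\Lambda, Y) \lra \Hom_\Lambda(\Lambda, X) \lra F(\Lambda) \lra 0 .
\]
Under the natural isomorphism $\Hom_\Lambda(\Lambda, M) \simeq M$ given by evaluation at~$1$, the first map is carried to $f$ itself, so $F(\Lambda) \simeq \Cok f = v(F)$.

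The lemma then follows at once: $v(F)$ vanishes precisely when $F(\Lambda)$ vanishes, that is, precisely when $F$ vanishes on the regular module. (This triviality is why the statement carries no proof.)

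The only point deserving a moment's attention is the naturality of the identification $\Hom_\Lambda(\Lambda, -) \simeq \mathrm{id}$, which is what guarantees that the induced map $\Hom_\Lambda(\Lambda, Y) \to \Hom_\Lambda(\Lambda, X)$ is literally $f$, and not merely some map sharing its cokernel; this is an immediate consequence of the Yoneda lemma and presents no real obstacle. It is worth contrasting the argument with the covariant Lemma~\ref{L:defect-zero}: there the vanishing of the defect $w(F) = \Ker f$ has to be detected by applying $\Hom_\Lambda(w(F), -)$ to \emph{all} injective modules, whereas here a single representable evaluated at the generator~$\Lambda$ already recovers the module on the nose, so no cogenerating family is needed and the regular module alone suffices.
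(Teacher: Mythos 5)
Your proposal is correct and is essentially the paper's own argument: the paper disposes of the lemma by noting that $v(F) \simeq F(\Lambda)$ is immediate from the definition (via evaluating the presentation at $\Lambda$ and the identification $\Hom_\Lambda(\Lambda, M) \simeq M$), exactly as you do. Your added remarks on naturality and the contrast with the covariant case are sound but not needed.
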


\begin{remark}
 The first phenomenological study of the defect of a finitely presented functor was undertaken by Jeremy Russell 
 in~\cite{Russ16}.
\end{remark}

\section{Finite presentation of the stable Hom functors}

Given a left $\Lambda$-module $A$ we are going look at both covariant and contravariant Hom modulo projectives and injectives.

\subsection{The functor $(\overline{A, \blank})$ is finitely presented}

$(\overline{A, \blank})$ is the quotient of $(A, \blank)$ by the subfunctor $I(A, \blank)$ of all  maps factoring through injectives. The latter, being a subfunctor of the additive functor $(A, \blank)$ is itself additive and therefore $(\overline{A, \blank})$ is also additive. Taking a cosyzygy sequence $0 \to A \to I \to \Sigma A \to 0$ and applying the Yoneda embedding we have an exact sequence of functors 
\[
0 \to (\Sigma A, \blank) \to (I, \blank) \to (A, \blank).
\]
Each component of the last natural transformation has its image in the maps factoring through injectives. On the other hand, any map with domain $A$ factoring through an injective obviously extends to $I$, which shows that the image of the last component is exactly $I(A, \blank)$, the functor of maps factoring through injectives. This yields an exact sequence 
\[
0 \to (\Sigma A, \blank) \to (I , \blank) \to (A, \blank) \to (\overline{A, \blank}) \to 0
\]
and shows that the covariant Hom modulo injectives is finitely presented. 

\begin{remark}
 The functor $(\overline{\Lambda, \blank})$ is precisely the cotorsion coradical $\injc$ introduced in~\cite{MR-2}.
\end{remark}

\subsection{The functor $(\underline{A, \blank})$}

This functor is defined by the exact sequence
\[
0 \lra P(A, \blank) \lra (A, \blank) \lra (\underline{A, \blank}) \lra 0,
\]
where $P(A, \blank)$ applied to a module $B$ produces all maps 
$A\to B$ that factor through projectives. Recall that $(\underline{A, \blank})$ is finitely presented if and only if $P(A, \blank)$ is finitely generated~\cite[p.~130]{Aus82}. We shall see there are rings and modules $A$ for which $P(A, \blank)$ is not finitely generated, but showing this requires some work.

We begin by recalling the notion of the Bass torsion $\mathfrak{t}$. This is the subfunctor of the identity functor on $\LMod$ whose value on~$A$ is defined by the exact sequence $0 \to \mathfrak{t}(A) \to A \to A ^{\ast\ast}$, where $A \to A^{\ast\ast}$ is the canonical evaluation map into the double dual of $A$.\footnote{By extending the codomain of $\mathfrak{t}$ from $\LMod$ to $\ab$ (this amounts to precomposing $\mathfrak{t}$ with the forgetful functor $(\Lambda, \blank)$), we can also view it as a subfunctor of the forgetful functor.} We shall refer to $\mathfrak{t}(A)$ as the Bass torsion submodule of~$A$. It consists of all elements of~$A$ on which every linear from on $A$ vanishes. It also coincides with the reject in $A$ of the regular left module or, equivalently, the reject in $A$ of the class of projectives. Modules with Bass torsion zero are classically referred to as torsionless modules. 

\begin{definition}\label{F:torsion}
 If $\mathfrak{t}(A) = A$, we shall say that $A$ is (Bass) torsion.
\end{definition}

It is not difficult to see that $A$ is torsion if and only if $A^{\ast} = 0$.

\begin{proposition}
 The Bass torsion $\mathfrak{t}$ is a radical, i.e., $\mathfrak{t}$ is a subfunctor of the identity functor on modules, such that $\mathfrak{t}(A/\mathfrak{t}(A)) = 0$ for any module $A$.
\end{proposition}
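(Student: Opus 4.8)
The plan is to argue directly from the description of $\mathfrak{t}(A)$ recorded above, namely that $\mathfrak{t}(A)$ is the reject in $A$ of the regular module, i.e., $\mathfrak{t}(A) = \bigcap_{f \in A^{\ast}} \Ker f$, the intersection being taken over all linear forms $f \colon A \to \Lambda$. That $\mathfrak{t}$ is a subfunctor of the identity is immediate and essentially already noted when $\mathfrak{t}$ is introduced: if $h \colon A \to B$ is $\Lambda$-linear and $x \in \mathfrak{t}(A)$, then for every $g \in B^{\ast}$ we have $gh \in A^{\ast}$, so $g(h(x)) = (gh)(x) = 0$; as $g$ was arbitrary, $h(x) \in \mathfrak{t}(B)$. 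So the substance of the proposition is the identity $\mathfrak{t}(A/\mathfrak{t}(A)) = 0$, and that is what I would prove.

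Write $T := \mathfrak{t}(A)$ and let $\pi \colon A \to A/T$ be the canonical surjection. The key point is that $T \subseteq \Ker f$ for every $f \in A^{\ast}$ — this is precisely the defining property of $T$ as the reject of $\Lambda$ in $A$ — so each such $f$ factors uniquely as $f = \overline{f}\,\pi$ for a unique $\overline{f} \in (A/T)^{\ast}$, by the universal property of the cokernel. Conversely, every $g \in (A/T)^{\ast}$ arises this way, as $g = \overline{g\pi}$. Hence $f \mapsto \overline{f}$ is a bijection from $A^{\ast}$ onto $(A/T)^{\ast}$, compatible with evaluation along $\pi$.

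Now take $\overline{x} \in \mathfrak{t}(A/T)$ and choose a lift $x \in A$ with $\pi(x) = \overline{x}$. For every $f \in A^{\ast}$ we then have $f(x) = \overline{f}(\pi(x)) = \overline{f}(\overline{x}) = 0$, the last equality because $\overline{f} \in (A/T)^{\ast}$ and $\overline{x} \in \mathfrak{t}(A/T)$. Since $f$ was arbitrary, $x \in \bigcap_{f \in A^{\ast}} \Ker f = T$, and therefore $\overline{x} = \pi(x) = 0$. Thus $\mathfrak{t}(A/T) = 0$, as required.

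I do not expect a genuine obstacle here: this is the standard argument that the reject of a class of modules is always a radical, and the only step that needs any care is the identification of the linear forms on $A/T$ with those on $A$, which is just the universal property of the cokernel once one observes that $T$ lies in the kernel of every linear form on $A$. One could instead invoke the general fact about rejects and cite it, but the self-contained verification above is short enough to include in full.
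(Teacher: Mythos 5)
Your proof is correct and is exactly the argument the paper has in mind: the paper's entire proof is the one-line remark that this is a general property of rejects, and what you have written is the standard verification of that property for the reject of the regular module. The only cosmetic point is that you do not actually need the full bijection $A^{\ast} \to (A/T)^{\ast}$ — the factorization $f = \overline{f}\,\pi$ for each $f \in A^{\ast}$ already suffices for the final computation.
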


\begin{proof}
 This is a general property of rejects. 
\end{proof}

The module $A/\mathfrak{t}(A)$ will be referred to as the torsionless quotient module of $A$. Since $\mathfrak{t}$ is a subfunctor of the identity functor $\mathbf{1}$, Stenstr\"{o}m's notation $\mathfrak{t}^{-1}(A) := A/\mathfrak{t}(A)$ makes sense and we have a short exact sequence of endofunctors
\[
0 \lra \mathfrak{t} \lra \mathbf{1} \lra \mathfrak{t}^{-1} \lra 0.
\]

Assuming now that $P(A, \blank)$ is finitely generated, we have an epimorphism 
$(X, \blank) \to P(A, \blank)$ for some module $X$. Precomposing it with the inclusion $P(A, \blank) \to (A, \blank)$ we have a natural transformation $(\gamma, \blank) : (X, \blank) \to (A, \blank)$ induced, by the Yoneda lemma, by a uniquely determined $\gamma : A \to X$. Because the image of 
$(\gamma, \blank)$ is precisely $P(A, \blank)$, this map has the following properties:

\begin{itemize}

\item[a)] For any map $X \to B$, its composition with $\gamma$ factors through a projective (because the image of $(\gamma, \blank)$ is contained in $P(A, \blank)$). 

 \item[b)] Any map $ A \to B$ that factors through a projective extends to $X$ over 
 $\gamma$ (because $P(A, \blank)$ is contained in the image of $(\gamma, \blank)$).
 
\end{itemize}
For brevity, we shall refer to any such map $\gamma : A \to X$ as an approximation of $A$.

Conversely, if there is a module $X$ and an approximation map 
$\gamma : A \to X$ with the above properties, then the induced natural transformation $(\gamma, \blank) : (X, \blank) \to (A, \blank)$ has $P(A, \blank)$ as its image, making $P(A, \blank)$ finitely generated. 

Applying the condition a) to the identity map $ X = X$ we have that $\gamma$ factors through a projective. Conversely, if $\gamma$ factors through a projective, then the  condition a) is satisfied. Thus a) can be replaced by a*): $\gamma$ factors through a projective. Likewise, b) can be replaced by b*): any map from $A$ to a projective extends to~$X$ over $\gamma$. In summary: $P(A, \blank)$ is finitely generated if and only if there is a map $\gamma : A \to X$ satisfying  a) and b) or, equivalently, a*) and b*). As an immediate consequence of this observation, we have 
\begin{proposition}\label{P:qF}
If $\Lambda$ is quasi-Frobenius, then $P(A, \blank)$ is finitely generated for all $A$.
\end{proposition}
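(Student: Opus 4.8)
The plan is to apply the characterization established immediately above: $P(A, \blank)$ is finitely generated precisely when there is a module $X$ and an approximation map $\gamma : A \to X$ satisfying a*) and b*), i.e., $\gamma$ factors through a projective and every map from $A$ to a projective extends to $X$ over $\gamma$. The one fact about quasi-Frobenius rings that I would invoke is the standard characterization that over such a ring the classes of projective and injective (left) modules coincide; this is exactly what will make both a*) and b*) automatic.

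Concretely, given an arbitrary module $A$, I would take $X := I$ to be an injective envelope of $A$ (any monomorphism of $A$ into an injective module would do equally well) and let $\gamma : A \to I$ be the inclusion. Since $\Lambda$ is quasi-Frobenius, $I$ is projective, so $\gamma$ factors as $A \overset{\gamma}{\to} I \overset{1_I}{\to} I$ through the projective module $I$, which verifies a*). For b*), let $f : A \to P$ be any map into a projective module $P$; again, because $\Lambda$ is quasi-Frobenius, $P$ is injective, and since $\gamma$ is a monomorphism, $f$ extends to a map $g : I \to P$ with $g\gamma = f$. This verifies b*). Hence $\gamma : A \to I$ is an approximation of $A$, and by the characterization $P(A, \blank)$ is finitely generated.

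I do not expect a genuine obstacle here — the result is, as advertised, an immediate consequence of the preceding analysis. The only points worth stating carefully are (i) the equivalent description of quasi-Frobenius rings in terms of the coincidence of projectives and injectives, which is what trivializes both conditions, and (ii) the observation that the definition of a finitely generated functor imposes no finiteness on the representing module $X$, so there is no difficulty in using $X = I$ even when the injective envelope is infinitely generated; consequently the argument is uniform in $A$.
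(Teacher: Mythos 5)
Your proof is correct and follows essentially the same route as the paper: embed $A$ in an injective $X$, note that over a quasi-Frobenius ring $X$ is projective (giving a*)) and every projective target is injective (giving b*) via the extension property). The paper's proof is a one-line version of exactly this argument.
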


\begin{proof}
Just embed~$A$ in an injective $X$, which gives the desired map $\gamma : A \to X$, and then use the defining property of injectives to extend the required maps.
\end{proof}

In the language of commutative diagrams, b*) is the following problem (read from left to right):

\[
\begin{tikzcd}
	A \ar[r, "\gamma"] \ar[d, "\forall \alpha"']
	& X \ar[dl, dashed, "\exists \beta"]
\\
	\forall P
\end{tikzcd}
\]
where $P$ is projective. Assuming a*), if $\delta : Q \to X$ is a projective precover, then~$\gamma$ must factor through $Q$. Assuming b*), it follows that $\alpha$ factors through $Q$. This shows that we may replace~$X$ with the projective $Q$.  Our problem can now be stated as follows. For each module $A$ find a projective~$Q$ and a map $\gamma : A \to Q$ such that, for any projective $P$, any map $\alpha : A \to P$ extends over $\gamma$:
\[
\begin{tikzcd}
	A \ar[r, "\gamma"] \ar[d, "\forall \alpha"']
	& Q \ar[dl, dashed, "\exists \beta"]
\\
	\forall P
\end{tikzcd}
\]

Assuming that a map $\gamma$ with the desired properties exists, we are going to determine its kernel. Since the codomain of $\gamma$ is the projective $Q$, $\gamma$ vanishes on the torsion submodule $\mathfrak{t}(A)$, i.e., $\mathfrak{t}(A) \subseteq \Ker \gamma$. On the other hand, the commutative diagram above shows that $\Ker \gamma$ is contained in $\Ker \alpha$ for all $\alpha$. Since $\mathfrak{t}(A) = \cap_{P, \alpha} \Ker \alpha$, we have $\Ker \gamma \subseteq \mathfrak{t}(A)$, and therefore 
$\Ker \gamma =  \mathfrak{t}(A)$. As a result, we have an exact sequence $0 \to \mathfrak{t}(A) \to A \overset{\gamma}\to Q$ 
with~$Q$ projective and therefore a commutative diagram 
\[
\begin{tikzcd}
 	0 \ar[r ]
	& \mathfrak{t}(A) \ar[r] 
	& A \ar[r] \ar[dr, "\gamma"']
	& \mathfrak{t}^{-1}(A) \ar[r] \ar[d, "\gamma'"]
	& 0
\\
	&
	&
	& Q
\end{tikzcd}
\]
 where the row is exact and $\gamma'$ is monic. The next result is now obvious.
 
\begin{proposition}\label{P:reduction}
 $\gamma$ is an approximation for $A$ if and only if  $\gamma'$ is an approximation for  $\mathfrak{t}^{-1}(A)$. \qed
\end{proposition}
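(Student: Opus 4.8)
The plan is to reduce the statement to a single observation about rejects. By construction $\gamma = \gamma'\pi$, where $\pi : A \to \mathfrak{t}^{-1}(A)$ is the canonical surjection, and both $\gamma$ and $\gamma'$ take values in the \emph{same} projective module $Q$. The key point is that, for every projective $P$, precomposition with $\pi$ gives a map
\[
\Hom\bigl(\mathfrak{t}^{-1}(A), P\bigr) \longrightarrow \Hom(A, P), \qquad \bar\alpha \longmapsto \bar\alpha\pi ,
\]
which is bijective: it is injective because $\pi$ is epic, and surjective because, as recalled above, $\mathfrak{t}(A) = \bigcap_{P,\,\alpha}\Ker\alpha$ is the reject in $A$ of the class of projectives, so every $\alpha : A \to P$ vanishes on $\mathfrak{t}(A)$ and hence factors (necessarily uniquely) through $\pi$. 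In particular, taking $P = Q$, the map $\gamma$ is exactly the image of $\gamma'$ under this correspondence.

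With this in hand I would first dispose of condition a*): both $\gamma : A \to Q$ and $\gamma' : \mathfrak{t}^{-1}(A) \to Q$ have projective codomain, so each of them factors through a projective (via the identity of $Q$), and a*) holds automatically for both. Hence ``$\gamma$ is an approximation for $A$'' reduces to ``b*) holds for $\gamma$'', and likewise for $\gamma'$, and it remains only to compare the two instances of b*). So fix a projective $P$ and let $\alpha : A \to P$ correspond to $\bar\alpha : \mathfrak{t}^{-1}(A) \to P$ under the bijection above, i.e. $\alpha = \bar\alpha\pi$. For a map $\beta : Q \to P$ one then has $\beta\gamma = \alpha$ if and only if $\beta\gamma' = \bar\alpha$: indeed $\beta\gamma = \alpha$ gives $\beta\gamma'\pi = \bar\alpha\pi$, whence $\beta\gamma' = \bar\alpha$ after cancelling the epimorphism $\pi$; and conversely $\beta\gamma' = \bar\alpha$ gives $\beta\gamma = \beta\gamma'\pi = \bar\alpha\pi = \alpha$. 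Since $\alpha \leftrightarrow \bar\alpha$ ranges over all maps into all projectives, this shows that b*) holds for $\gamma$ precisely when it holds for $\gamma'$, which is the assertion.

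As the sentence preceding the statement already signals, there is no genuine obstacle here: the argument is just bookkeeping with the universal property of the quotient and with the fact that $Q$ is projective. The only step that is not purely formal is the surjectivity half of the displayed bijection, and that is nothing more than the defining property of $\mathfrak{t}(A)$ as the reject of the class of projectives, which has already been established.
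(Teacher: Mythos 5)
Your proof is correct and is exactly the argument the paper has in mind when it declares the result ``obvious'': conditions a*) are automatic because the common codomain $Q$ is projective, and the two instances of b*) correspond under the bijection $\Hom(\mathfrak{t}^{-1}(A),P)\to\Hom(A,P)$ induced by the canonical surjection $\pi$, which exists precisely because $\mathfrak{t}(A)$ is the reject of the projectives. Nothing is missing; the write-up just makes explicit the bookkeeping the paper leaves to the reader.
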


We remark, that the triangle in the above diagram is an epi-mono factorization of $\gamma$.
 
\begin{corollary}
 $P(A, \blank)$ is finitely generated for all modules $A$ if and only if it is finitely generated for all torsionless modules~$A$. \qed
\end{corollary}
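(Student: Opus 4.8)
The plan is to read this off directly from Proposition~\ref{P:reduction} together with the fact, proved above, that the Bass torsion $\mathfrak{t}$ is a radical. The forward implication is immediate, since torsionless modules form a subclass of all modules: if $P(A,\blank)$ is finitely generated for every module $A$, then in particular it is so for every torsionless one. All the content is in the converse, and the idea is simply to replace an arbitrary $A$ by its torsionless quotient $\mathfrak{t}^{-1}(A)$ and transport an approximation back along the projection.

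So suppose $P(B,\blank)$ is finitely generated for every torsionless module $B$, and let $A$ be arbitrary. Because $\mathfrak{t}$ is a radical, $\mathfrak{t}(\mathfrak{t}^{-1}(A)) = 0$, so $\mathfrak{t}^{-1}(A) = A/\mathfrak{t}(A)$ is torsionless; by hypothesis it admits an approximation $\gamma' : \mathfrak{t}^{-1}(A) \to X'$. By the kernel computation carried out just before Proposition~\ref{P:reduction}, every approximation has kernel equal to the Bass torsion of its source, so $\Ker \gamma' = \mathfrak{t}(\mathfrak{t}^{-1}(A)) = 0$, i.e. $\gamma'$ is monic. Now set $\gamma := \gamma' \circ \pi$, where $\pi : A \to \mathfrak{t}^{-1}(A)$ is the canonical projection. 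Since $\pi$ is epic and $\gamma'$ is monic, $\gamma = \gamma' \circ \pi$ is precisely the epi-mono factorization of $\gamma$, and $\Ker \gamma = \Ker \pi = \mathfrak{t}(A)$, so we are exactly in the configuration of the commutative diagram preceding Proposition~\ref{P:reduction}. That proposition (applied in the direction ``$\gamma'$ an approximation for $\mathfrak{t}^{-1}(A)$ implies $\gamma$ an approximation for $A$'') then gives that $\gamma$ is an approximation for $A$, whence $P(A,\blank)$ is finitely generated. This establishes the converse and hence the corollary.

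I do not expect a genuine obstacle here: the substantive work has already been done in isolating the notion of approximation, in the kernel computation, and in Proposition~\ref{P:reduction}, and what remains is formal. The one point that deserves a moment's care is verifying that the approximation $\gamma'$ furnished by the hypothesis actually fits the hypotheses of Proposition~\ref{P:reduction} — this is why one first records that an approximation of a torsionless module is automatically monic — and after that the argument is a routine diagram chase.
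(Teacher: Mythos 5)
Your proof is correct and follows the same route the paper intends: the corollary is left as an immediate consequence of Proposition~\ref{P:reduction}, and your argument just makes explicit the reduction of an arbitrary $A$ to its torsionless quotient $\mathfrak{t}^{-1}(A)$ via the epi-mono factorization $\gamma = \gamma'\pi$. The only point worth noting is that your appeal to the kernel computation to show $\gamma'$ is monic uses only the inclusion $\Ker\gamma' \subseteq \mathfrak{t}(\mathfrak{t}^{-1}(A))$, which holds without assuming the codomain of $\gamma'$ is projective, so that step is sound as written.
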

 
\begin{corollary}\label{C:embed}
 Let $A$ be a torsionless module. If $P(A, \blank)$ is finitely generated, then $A$ embeds in a projective, i.e., $A$ is a syzygy module. \qed
\end{corollary}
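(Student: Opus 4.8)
The plan is to read off the embedding directly from the analysis of the approximation map carried out in the preceding paragraphs. Since $P(A,\blank)$ is finitely generated, there is a map $\gamma : A \to X$ satisfying a*) and b*). As explained just before Proposition~\ref{P:reduction}, using a*) together with a projective precover $\delta : Q \to X$ of $X$ (which always exists, e.g. as a free cover) one may replace $X$ by the projective $Q$: the induced map $A \to Q$ still satisfies b*), since any $\alpha : A \to P$ with $P$ projective first extends over the old $\gamma$ and the extension then factors through $\delta$ because $P$ is projective. So we may assume from the outset that the codomain of the approximation is a projective module $Q$.

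Next I would compute $\Ker\gamma$. Because $Q$ is projective, hence torsionless, $\gamma$ kills $\mathfrak{t}(A)$, so $\mathfrak{t}(A)\subseteq\Ker\gamma$. Conversely, for every projective $P$ and every $\alpha : A\to P$, property b*) gives a factorization $\alpha = \beta\gamma$, whence $\Ker\gamma\subseteq\Ker\alpha$; intersecting over all such $\alpha$ and recalling that $\mathfrak{t}(A) = \bigcap_{P,\alpha}\Ker\alpha$ is the reject of the class of projectives in $A$, we obtain $\Ker\gamma\subseteq\mathfrak{t}(A)$. Hence $\Ker\gamma=\mathfrak{t}(A)$.

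Finally, the hypothesis that $A$ is torsionless means $\mathfrak{t}(A)=0$, so $\Ker\gamma=0$ and $\gamma : A\hookrightarrow Q$ is a monomorphism into a projective. Therefore $A$ is isomorphic to a submodule of a projective, i.e., a syzygy module, as claimed. Since every ingredient has already been assembled in the discussion leading up to Proposition~\ref{P:reduction}, there is essentially no obstacle here; the only point requiring a moment's care is the first step — replacing the codomain of an arbitrary approximation by a projective one — and that is the routine diagram chase with the projective precover recalled above.
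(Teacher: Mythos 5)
Your proof is correct and follows exactly the route the paper intends: pass to an approximation with projective codomain via a projective precover, identify $\Ker\gamma$ with $\mathfrak{t}(A)$, and conclude from torsionlessness that $\gamma$ is monic. (One small wording quibble: in the replacement step the required map $Q\to P$ is simply the composite $\beta\delta$ of the old extension with the precover, so no appeal to projectivity of $P$ is needed there; this does not affect the argument.)
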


Assume now that $A$ is torsionless and $P(A,\blank)$ is finitely generated so that we have an approximation $\gamma : A \to Q$ with~$Q$  projective and the corresponding short exact sequence $0 \to A \overset{\gamma}{\to} Q \to M \to 0$. Then, for any projective $P$, the induced map $(\gamma, P) : (Q, P) \to (A,P)$ is epic. Applying the functor $(\blank, P)$ to the approximation and passing to the long exact sequence, we have that $\Ext^{1}(M, \blank)$ vanishes on projectives. 

Reversing the foregoing argument, we see that a first syzygy module $A$ of a module $M$ such that $\Ext^{1}(M, \blank)$ vanishes on projectives has the property that $P(A, \blank)$ is finitely generated. This leads to the following criterion.

\begin{theorem}
The functor $(\underline{A, \blank})$ is finitely presented if and only if the torsionless quotient module $\mathfrak{t}^{-1}(A) = A/\mathfrak{t}(A)$ of~$A$ is a first syzygy module of a module $M$ such that $\Ext^{1}(M, \blank)$ vanishes on projectives. \qed
\end{theorem}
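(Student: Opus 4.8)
The plan is to assemble the theorem from the three ingredients already developed in this subsection: the reduction of the question from $A$ to its torsionless quotient $\mathfrak{t}^{-1}(A)$, the equivalence between finite generation of $P(A,\blank)$ and the existence of an approximation $\gamma\colon A\to Q$ into a projective, and the long-exact-sequence argument connecting approximations of torsionless modules to the vanishing of $\Ext^1$ on projectives. The whole proof is really a matter of chaining equivalences, and every link in the chain has already been established, so the task is mostly bookkeeping.

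First I would invoke the criterion from the beginning of \S 3.2: $(\underline{A, \blank})$ is finitely presented if and only if $P(A,\blank)$ is finitely generated. Next, by Proposition~\ref{P:reduction} (and the remark that the accompanying triangle is an epi-mono factorization), the existence of an approximation for $A$ is equivalent to the existence of an approximation for $\mathfrak{t}^{-1}(A)$; since having an approximation is exactly the same as $P(-,\blank)$ being finitely generated, this gives
\[
P(A,\blank) \text{ fin.\ gen.} \iff P(\mathfrak{t}^{-1}(A),\blank) \text{ fin.\ gen.}
\]
Because $\mathfrak{t}$ is a radical, $\mathfrak{t}^{-1}(A)$ is torsionless, so I may now work exclusively with a torsionless module and apply the paragraph just before the theorem. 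In the forward direction, an approximation $\gamma\colon \mathfrak{t}^{-1}(A)\to Q$ into a projective is (by Corollary~\ref{C:embed}, i.e., $\Ker\gamma=\mathfrak{t}(\mathfrak{t}^{-1}(A))=0$) a monomorphism, giving a short exact sequence $0\to \mathfrak{t}^{-1}(A)\to Q\to M\to 0$; applying $(\blank,P)$ for projective $P$ and using that $(\gamma,P)$ is epic, the long exact sequence forces $\Ext^1(M,\blank)$ to vanish on projectives, exhibiting $\mathfrak{t}^{-1}(A)$ as a first syzygy of such an $M$.

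For the converse, I would reverse exactly this reasoning: if $\mathfrak{t}^{-1}(A)$ is a first syzygy of some $M$ with $\Ext^1(M,\blank)$ vanishing on projectives, choose a short exact sequence $0\to \mathfrak{t}^{-1}(A)\to Q\to M\to 0$ with $Q$ projective; for any projective $P$, the long exact sequence $(Q,P)\to(\mathfrak{t}^{-1}(A),P)\to\Ext^1(M,P)=0$ shows the inclusion map is an approximation of $\mathfrak{t}^{-1}(A)$, hence $P(\mathfrak{t}^{-1}(A),\blank)$ is finitely generated, hence so is $P(A,\blank)$, hence $(\underline{A,\blank})$ is finitely presented. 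Putting the two directions together yields the stated biconditional.

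I do not anticipate a genuine obstacle here, since the paper has already done all the conceptual work; the one point that needs care is making sure the reduction step is stated symmetrically — that an approximation for $A$ and an approximation for $\mathfrak{t}^{-1}(A)$ exist simultaneously, not merely that one produces the other — but this is exactly the content of Proposition~\ref{P:reduction}, which is phrased as an ``if and only if.'' The only other subtlety is the identification $\Ker\gamma = \mathfrak{t}(A)$ used implicitly to see that over a torsionless module an approximation is automatically monic, and that too is recorded above. Thus the proof is a short concatenation of the cited results.
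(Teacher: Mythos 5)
Your proposal is correct and follows essentially the same route as the paper: the theorem carries a \qed precisely because its proof is the assembly you describe, namely the equivalence of finite presentation of $(\underline{A,\blank})$ with finite generation of $P(A,\blank)$, the reduction to the torsionless quotient via Proposition~\ref{P:reduction}, the identification $\Ker\gamma=\mathfrak{t}(A)$ making the approximation of a torsionless module monic, and the $\Ext^{1}$ long-exact-sequence argument run in both directions. Your explicit attention to the symmetry of the reduction step and to the triviality of condition a*) when the codomain is already projective matches the paper's intent exactly.
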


Rephrasing the just proved result, $(\underline{A, \blank})$ is finitely presented if and only if there is  an exact sequence
\begin{equation}
 0 \to \mathfrak{t}(A) \to A \to Q \to M \to 0,
\end{equation}
where $Q$ is projective, the first map is the canonical inclusion, and $\Ext^{1}(M, \blank)$ vanishes on projectives. In that case, the epi-mono factorization of the approximation is given by 
$A \to  \mathfrak{t}^{-1}(A) \to Q$. The exact sequence above gives rise to an exact sequence of functors
\[
0 \to (M, \blank) \to (Q, \blank) \to (A, \blank) \to (\underline{A, \blank}) \to 0
\]
and also shows that, when $(\underline{A, \blank})$ is finitely presented, the defect $w(\underline{A, \blank})$ of $(\underline{A, \blank})$ is isomorphic to $\mathfrak{t}(A)$.

\begin{remark}
 In~\cite{Mar24}, the notion of defect was extended to arbitrary additive functors, and it was shown that $w(\underline{A, \blank}) \simeq \mathfrak{t}(A)$ without any assumption on $(\underline{A, \blank})$ or on $A$
\end{remark}
\medskip

We now specialize to the case when $\Lambda$ is left hereditary. Then the image of $\gamma : A \to Q$ must be projective and we may, by Proposition~\ref{P:reduction}, replace $Q$ with that image and set $M := 0$. This proves

\begin{theorem}\label{T:her-cov-hom}
 Suppose $\Lambda$ is left hereditary. Then $(\underline{A, \blank})$ is finitely presented if and only if $\mathfrak{t}^{-1}(A) = A/\mathfrak{t}(A)$ is projective. In that case, $A \simeq \mathfrak{t}(A)  \coprod \mathfrak{t}^{-1}(A)$, $\mathfrak{t}(A)$ is torsion (i.e., the inclusion 
 $\mathfrak{t}^{2}(A) \to \mathfrak{t}(A)$ is an isomorphism)\footnote{The reader is warned that, in general, the torsion submodule need not be torsion, see Definition~\ref{F:torsion}. In other words, the Bass torsion radical is not idempotent.}, and  $(\underline{A, \blank}) = (\mathfrak{t}(A), \blank)$. \qed
\end{theorem}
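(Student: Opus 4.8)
The plan is to specialize the general criterion (the Theorem immediately preceding) to the left hereditary case and to trace through what the epi-mono factorization of an approximation $\gamma : A \to Q$ becomes. By the general theorem, $(\underline{A, \blank})$ is finitely presented iff $\mathfrak{t}^{-1}(A)$ is a first syzygy of some $M$ with $\Ext^1(M, \blank)$ vanishing on projectives; equivalently, iff there is an exact sequence $0 \to \mathfrak{t}(A) \to A \to Q \to M \to 0$ with $Q$ projective and $\Ext^1(M,\blank)$ vanishing on projectives. The key structural input is that over a left hereditary ring, submodules of projectives are projective. Applied to $\imr \gamma \subseteq Q$, this forces $\imr \gamma$ to be projective, and by Proposition~\ref{P:reduction} we may take $Q := \imr \gamma$ and hence $M := \coker(\gamma' : \mathfrak{t}^{-1}(A) \to Q) = 0$, since $\gamma'$ is then an isomorphism onto the projective $\imr \gamma$. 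So first I would argue the forward direction: if $(\underline{A,\blank})$ is finitely presented, an approximation $\gamma$ exists, its image is projective by heredity, $\gamma'$ is an iso onto this image, hence $\mathfrak{t}^{-1}(A)$ is projective.

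**The converse and the splitting.** For the converse, suppose $\mathfrak{t}^{-1}(A)$ is projective. Then in the short exact sequence $0 \to \mathfrak{t}(A) \to A \to \mathfrak{t}^{-1}(A) \to 0$ the quotient is projective, so the sequence splits, giving $A \simeq \mathfrak{t}(A) \coprod \mathfrak{t}^{-1}(A)$. Taking $Q := \mathfrak{t}^{-1}(A)$, the composite $A \twoheadrightarrow \mathfrak{t}^{-1}(A) = Q$ is a map to a projective whose kernel is exactly $\mathfrak{t}(A)$; one checks it satisfies the approximation properties a*) and b*) — a*) is automatic since $Q$ is projective (so $\gamma$ even is, trivially, "through a projective"), and b*) follows because any $\alpha : A \to P$ with $P$ projective kills $\mathfrak{t}(A)$ (as $P$ is torsionless, being a submodule-free... rather, $P$ is projective hence torsionless), so $\alpha$ factors through $A/\mathfrak{t}(A) = Q$, i.e. extends over $\gamma$. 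Hence $\gamma$ is an approximation and $P(A, \blank)$ is finitely generated, so $(\underline{A, \blank})$ is finitely presented. The exact functor sequence from the general discussion with $M = 0$ collapses to $0 \to (Q, \blank) \to (A, \blank) \to (\underline{A, \blank}) \to 0$; combined with the splitting $A \simeq \mathfrak{t}(A) \coprod Q$ this yields $(\underline{A, \blank}) \simeq (\mathfrak{t}(A), \blank)$, since $(A, \blank) \simeq (\mathfrak{t}(A), \blank) \times (Q, \blank)$ and we are quotienting by the $(Q, \blank)$ summand.

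**The idempotency of $\mathfrak{t}$ on $\mathfrak{t}(A)$.** It remains to see that $\mathfrak{t}(A)$ is torsion, i.e. $\mathfrak{t}^2(A) = \mathfrak{t}(A)$, equivalently $\mathfrak{t}(A)^{\ast} = 0$. Apply $\Hom(\blank, \Lambda)$ to the split decomposition: $A^{\ast} \simeq \mathfrak{t}(A)^{\ast} \coprod Q^{\ast}$. But $\mathfrak{t}(A)$ is the reject in $A$ of the projectives, so every linear form on $A$ vanishes on $\mathfrak{t}(A)$; since the decomposition is a direct sum, the restriction map $A^{\ast} \to \mathfrak{t}(A)^{\ast}$ (projection onto that summand) is simultaneously surjective (it is a direct summand projection) and zero. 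Hence $\mathfrak{t}(A)^{\ast} = 0$, i.e. $\mathfrak{t}(A)$ is torsion.

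**Anticipated obstacle.** None of the steps is deep; the only point requiring care is making sure the approximation $\gamma$ in the hereditary converse genuinely has image equal to $P(A, \blank)$ under Yoneda — i.e., that \emph{every} map $A \to P$ with $P$ projective (not just to free modules) factors through $\mathfrak{t}^{-1}(A)$. This is exactly the statement that $\mathfrak{t}(A) = \bigcap_{P \text{ proj}, \alpha} \Ker \alpha$, which was already recorded in the text as the description of the reject of the class of projectives; so the "obstacle" is really just invoking that identification correctly rather than proving anything new. A secondary point to state cleanly is why the functor isomorphism $(\underline{A,\blank}) \simeq (\mathfrak{t}(A), \blank)$ respects the quotient — but this is formal from the direct sum decomposition of $(A, \blank)$ and the definition of $P(A, \blank)$ as the image of $(\gamma, \blank)$, which here is precisely the $(Q, \blank)$ summand.
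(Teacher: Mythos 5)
Your proposal is correct and follows essentially the same route as the paper: specialize the general syzygy criterion, use heredity to make the image of the approximation $\gamma$ projective (so $M = 0$ and $\mathfrak{t}^{-1}(A) \simeq \imr\gamma$ is projective), and then read off the splitting, the vanishing of $\mathfrak{t}(A)^{\ast}$, and the identification $(\underline{A, \blank}) \simeq (\mathfrak{t}(A), \blank)$ from the resulting direct sum decomposition. The paper leaves these last verifications implicit, and the details you supply for them are accurate.
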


We can now characterize left hereditary rings over which all functors 
$(\underline{A, \blank})$ are finitely presented. 

\begin{proposition}
 Assume that $\Lambda$ is left hereditary. Then $(\underline{A, \blank})$ is finitely presented for all left modules $A$ if and only if $\Lambda$ is left perfect and right coherent. In particular, this holds if $\Lambda$ is the path algebra of a finite quiver without oriented cycles over a field.
 \end{proposition}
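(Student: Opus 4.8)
The plan is to reduce the assertion to a statement about torsionless modules by means of Theorem~\ref{T:her-cov-hom}, and then to invoke Chase's theorem on direct products of projective modules.

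First I would unwind the condition supplied by Theorem~\ref{T:her-cov-hom}. Since $\Lambda$ is left hereditary, that theorem says $(\underline{A,\blank})$ is finitely presented exactly when $\mathfrak{t}^{-1}(A)=A/\mathfrak{t}(A)$ is projective. Because $\mathfrak{t}$ is a radical, $A/\mathfrak{t}(A)$ is torsionless for every $A$, and conversely every torsionless module $B$ satisfies $\mathfrak{t}(B)=0$, so that $B=B/\mathfrak{t}(B)$. Hence, as $A$ ranges over all left $\Lambda$-modules, $A/\mathfrak{t}(A)$ ranges over exactly the torsionless left modules, and ``$(\underline{A,\blank})$ is finitely presented for all $A$'' is equivalent to ``every torsionless left $\Lambda$-module is projective.'' Next I would use the fact that a left module is torsionless precisely when it embeds in some product $\Lambda^{I}$ of copies of the regular module (immediate from the description, recalled in the excerpt, of $\mathfrak{t}(A)$ as the reject in $A$ of the class of projectives). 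On the one hand every $\Lambda^{I}$ is torsionless, so if all torsionless modules are projective then all the $\Lambda^{I}$ are projective. On the other hand, if all the $\Lambda^{I}$ are projective then — crucially using that $\Lambda$ is left hereditary, so that submodules of projectives are projective — every torsionless module, being a submodule of some projective $\Lambda^{I}$, is projective. Thus, under the standing hypothesis, the condition in the proposition is equivalent to: $\Lambda^{I}$ is a projective left $\Lambda$-module for every index set $I$.

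At this point I would cite Chase's theorem (S.~U.~Chase, \emph{Direct products of modules}, Trans. Amer. Math. Soc. \textbf{97} (1960)): the left $\Lambda$-module $\Lambda^{I}$ is projective for every set $I$ if and only if $\Lambda$ is left perfect and right coherent. One direction is soft — flatness of all the $\Lambda^{I}$ as left modules is exactly right coherence, via Chase's flatness criterion — so the content is the reverse implication, that projectivity of all the $\Lambda^{I}$ forces $\Lambda$ to be left perfect; this is the substantial half of Chase's theorem and rests on Bass's Theorem~P. Combining this with the previous paragraph yields the stated equivalence. For the final sentence I would observe that the path algebra $\Bbbk Q$ of a finite quiver $Q$ without oriented cycles has only finitely many paths, hence is a finite-dimensional $\Bbbk$-algebra; it is therefore two-sided artinian (so left perfect, and right noetherian, a fortiori right coherent) and it is hereditary, so both hypotheses are met.

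The only non-formal ingredient in this plan is Chase's characterization of the rings over which all powers $\Lambda^{I}$ are projective; the rest is bookkeeping with Theorem~\ref{T:her-cov-hom} and the definition of torsionless modules. If one wanted to avoid the citation, the hard part would be precisely the nontrivial half of Chase's theorem: manufacturing a flat but non-projective left module from a strictly descending chain of principal right ideals of $\Lambda$, which is where left perfectness genuinely enters.
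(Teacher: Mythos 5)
Your argument is correct and follows essentially the same route as the paper: reduce via Theorem~\ref{T:her-cov-hom} (the paper also invokes Corollary~\ref{C:embed} for one direction) to the statement that all torsionless modules, equivalently all products of copies of $\Lambda$, are projective, using the hereditary hypothesis to pass from ``embeds in a projective'' to ``projective,'' and then cite Chase's theorem; the treatment of the path-algebra case is likewise identical.
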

  
\begin{proof}
Assume that  $P(A, \blank)$ is finitely generated for all left modules $A$. Since any product of projectives is torsionless, it embeds, by Corollary~\ref{C:embed}, in a projective and is therefore projective. It is a well-known theorem of Chase~\cite[Theorem~3.3]{Chase} that this forces $\Lambda$ to be left perfect and right coherent. 

Conversely, suppose $\Lambda$ is left perfect and right coherent and let $A$ be an arbitrary module. Since $\mathfrak{t}$ is a radical, $\mathfrak{t}^{-1}(A)$ is torsionless and hence embeds in a product of copies of the regular module which, by the same theorem of Chase, is projective. 
Since~$\Lambda$ is left hereditary, $\mathfrak{t}^{-1}(A))$ must be projective and we have a direct sum decomposition 
$A \simeq \mathfrak{t}(A) \coprod \mathfrak{t}^{-1}(A)$. 
The first claim now follows from Theorem~\ref{T:her-cov-hom}. The second claim follows from the facts that the path algebra in question is hereditary and finite-dimensional over the field.
\end{proof}

\begin{remark}
 The last assertion of the just proved proposition shows that the converse to Proposition~\ref{P:qF} is not true.
\end{remark}

\subsection{The functor $(\underline{\blank, A})$ is finitely presented}

$(\underline{\blank, A})$ is the quotient of the contravariant functor $(\blank, A)$ by the subfunctor $P(\blank, A)$ of all  maps factoring through projectives. The latter, being a subfunctor of the additive functor $(\blank, A)$ is itself additive and therefore 
$(\underline{\blank, A})$ is also additive. Taking a syzygy sequence 
$0 \to \Omega A \to P \to A \to 0$ and applying the Yoneda embedding we have an exact sequence of functors 
\[
0 \to (\blank, \Omega A) \to (\blank, P) \to (\blank, A).
\]
Each component of the last natural transformation has its image in the maps factoring through prjectives. On the other hand, any map with codomain $A$ factoring through a projective obviously lifts to $P$, which shows that the image of the last component is exactly $P(\blank, A)$, the functor of maps factoring through projectives. This yields an exact sequence 
\[
0 \to (\blank, \Omega A) \to (\blank, P) \to (\blank, A) \to (\underline{\blank, A}) \to 0
\]
and shows that the contravariant Hom modulo projectives is finitely presented. 

\subsection{The functor $(\overline{\blank, A})$}

The functor contravariant $\Hom$ modulo injectives can be treated in a manner ``dual" to that of the covariant $\Hom$ modulo projectives. To this end, projectives should be replaced with injectives and arrows need to be reversed. To come up with a notion dual to the quotient modulo Bass torsion, we remark that the Bass torsion of a module is precisely the reject of projectives in that module. For the desired dual, which should be a subfunctor of the identity functor, we  take the trace $Tr(\mathscr{I}, \blank)$ of injectives, i.e., the subfunctor whose value on a module is the trace of the class of all injectives in that module. Borrowing the ``reciprocal'' notation for quotients of the identity functor in~\cite{MR-2}, we denote it by $\injc^{-1}(A)$. This reflects the fact that the cokernel of the inclusion $\injc^{-1}(A) \to A$, called the cotorsion quotient module of $A$, was denoted by $\injc(A)$ in~\cite{MR-2}. Accordingly, we shall refer to $\injc^{-1}(A)$ as the cotorsion-free submodule of $A$. 

\begin{remark}
 The cotorsion functor $\injc$ is a coradical on $\LMod$, i.e., a radical on the opposite category. This is a general property of quotients of the identity endofunctor by traces.
\end{remark}

Let $A$ be a left $\Lambda$-module. To claim that $(\overline{\blank, A})$ is finitely presented is equivalent to saying that the functor $I(\blank, A)$ of maps factoring through injectives is finitely generated. Similar to the arguments before, this amounts to saying that there is a module $Y$ and a map $\gamma : Y \to A$ such that $\gamma$ factors through an injective and any map with codomain $A$ factoring through an injective lifts against $\gamma$. As an immediate consequence of this observation, we have 
\begin{proposition}\label{P:qF-bis}
If $\Lambda$ is quasi-Frobenius, then $I(\blank, A)$ is finitely generated for all $A$. 
\end{proposition}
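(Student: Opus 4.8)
The plan is to dualize the argument used in Proposition~\ref{P:qF}, with ``projective'' replaced by ``injective'' and all arrows reversed, now exploiting the defining property of \emph{injective} modules directly (rather than, as in the covariant case, embedding into an injective). First I would take an injective copresentation of $A$ in the crudest possible way: since $\Lambda$ is quasi-Frobenius, $\Lambda$ itself is injective, so one may take a free cover $F \twoheadrightarrow A$ with $F$ projective, hence (over a quasi-Frobenius ring) injective. Set $Y := F$ and let $\gamma : Y \to A$ be this epimorphism. Because $Y$ is injective, $\gamma$ trivially factors through an injective (namely through $Y$ itself, via the identity), so the first required condition on the approximation map is satisfied.

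Next I would verify the lifting condition: any map $\alpha : B \to A$ that factors through an injective $J$ must lift against $\gamma$, i.e., there is $\beta : B \to Y$ with $\gamma\beta = \alpha$. So suppose $\alpha$ factors as $B \xrightarrow{u} J \xrightarrow{v} A$ with $J$ injective. Since $\Lambda$ is quasi-Frobenius, $J$ is also projective, so the composite $J \xrightarrow{v} A$ lifts along the epimorphism $\gamma : Y \twoheadrightarrow A$ to a map $w : J \to Y$ with $\gamma w = v$; then $\beta := wu$ gives $\gamma\beta = \gamma w u = vu = \alpha$, as required. Thus $\gamma : Y \to A$ has exactly the two properties stated before Proposition~\ref{P:qF-bis}, so the induced natural transformation $(\blank, \gamma) : (\blank, Y) \to (\blank, A)$ has image precisely $I(\blank, A)$, making the latter finitely generated; equivalently, $(\overline{\blank, A})$ is finitely presented.

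The one point that deserves care — and is the main (though modest) obstacle — is making sure the chosen $\gamma$ genuinely realizes \emph{all} maps factoring through injectives in its image, not just some of them; this is exactly what forces $\gamma$ to be an \emph{epimorphism} onto $A$ and is the reason a surjection from a projective-injective module is the right choice, rather than, say, an arbitrary injective hull map $A \to E(A)$ (which would be the naive ``dual'' of the covariant construction but points the wrong way). Once the epimorphism is in place, the quasi-Frobenius hypothesis is used twice — once to guarantee $Y$ is injective (so $\gamma$ factors through an injective) and once to guarantee that injectives are projective (so maps out of them lift through $\gamma$) — and everything else is formal, exactly paralleling the proof of Proposition~\ref{P:qF}.
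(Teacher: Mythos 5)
Your proof is correct and is essentially the paper's own argument: cover $A$ by a projective $Y$ (injective since $\Lambda$ is quasi-Frobenius, so $\gamma$ factors through an injective) and lift maps from injectives against the epimorphism $\gamma$ using the fact that injectives are projective over a quasi-Frobenius ring. The paper states this in one sentence; you have merely spelled out the verification of the two approximation conditions.
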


\begin{proof}
Just cover~$A$ with a projective~$Y$, which gives the desired map $\gamma : Y \to A$, and then use the defining property of projectives to lift the required maps.
\end{proof}

Repeating, with necessary modifications, previously used arguments we have an approximation $\gamma : I \to A$ with an injective $I$. The image of $\gamma$ is in $Tr(\mathscr{I}, A) = \injc^{-1}(A)$ and we have a commutative diagram 
\[
\begin{tikzcd}
 	& \injc^{-1}(A) \ar[d, rightarrowtail]
\\
	I \ar[ur, "\gamma'"] \ar[r, "\gamma"']
	& A	
\end{tikzcd}
\]
Since any map from an injective to $A$ lifts against $\gamma$,  the image of $\gamma$ contains the image of the map and, therefore, it  contains the injective trace $\injc^{-1}(A)$. Since the image of~$\gamma$ is clearly contained in the injective trace, the image of $\gamma$ is precisely the injective trace of $A$ and the diagram above is an epi-mono factorization of $\gamma$. Similar to the previous arguments, one shows that~$\gamma$ is an approximation for~$A$ if and only if $\gamma'$ is an approximation for $\injc^{-1}(A)$. Thus we can concentrate on approximations $\gamma : I \to A$, where $A$ coincides with its own injective trace\footnote{Thus $\injc(A) = 0$.}, and assume that $\gamma$ is epic. As a result, $A$ admits an approximation in the form of a short exact sequence 
$0 \to N \to I \overset{\gamma}{\to} A \to 0$. Mapping an arbitrary injective into this sequence, we have that $\Ext^{1}(\blank, N)$ vanishes on injectives. Reversing the foregoing argument, we see that a first cosyzygy module $A$ of a module $N$ such that $\Ext^{1}(\blank, N)$ vanishes on injectives has the property that $I(\blank, A)$ is finitely generated.

Recall that modules which coincide with their own injective trace are said to be cotorsion-free~\cite[Definition~3.20]{MR-2}. We thus have the following criterion.

\begin{theorem}\label{T:contra-fp}
The functor $(\overline{\blank, A})$ is finitely presented if and only if the cotorsion-free submodule $Tr(\mathscr{I}, A) = \injc^{-1}(A)$ of $A$ is a first cosyzygy module of a module $N$ such that $\Ext^{1}(\blank, N)$ vanishes on injectives. \qed
\end{theorem}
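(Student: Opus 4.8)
The plan is to mirror, step by step, the dualization that the text has been carrying out for the covariant Hom-modulo-projectives case, keeping careful track of which resolutions get reversed. The whole argument hinges on the equivalence, recalled from~\cite{Aus82}, that $(\overline{\blank, A})$ is finitely presented if and only if the subfunctor $I(\blank, A)$ of maps factoring through injectives is finitely generated. So the first step is to record that equivalence explicitly: starting from the defining exact sequence $0 \to I(\blank, A) \to (\blank, A) \to (\overline{\blank, A}) \to 0$, finite presentation of the quotient is equivalent to finite generation of the sub.

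Next I would translate ``$I(\blank, A)$ is finitely generated'' into the existence of an \emph{approximation} $\gamma : Y \to A$, exactly as in the covariant case but with all arrows reversed and projectives swapped for injectives: a map $\gamma$ such that (a) $\gamma$ itself factors through an injective, and (b) every map into $A$ that factors through an injective lifts along $\gamma$. Condition (a) comes from applying (b*)-type reasoning to the identity of $Y$, so it may be stated as the cleaner ``$\gamma$ factors through an injective.'' Then I would run the reduction: because $\gamma$ factors through an injective, its image lies in the injective trace $Tr(\mathscr{I}, A) = \injc^{-1}(A)$; because every map from an injective into $A$ lifts along $\gamma$, the image of $\gamma$ \emph{contains} every such image and hence contains $\injc^{-1}(A)$. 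Therefore $\imr \gamma = \injc^{-1}(A)$, the displayed triangle in the excerpt is the epi-mono factorization of $\gamma$, and one checks — dually to Proposition~\ref{P:reduction} — that $\gamma$ is an approximation for $A$ if and only if the induced epimorphism $\gamma' : I \twoheadrightarrow \injc^{-1}(A)$ is an approximation for $\injc^{-1}(A)$. This lets us assume from the start that $A$ is cotorsion-free (i.e.\ $\injc(A) = 0$) and that $\gamma$ is epic, so an approximation presents itself as a short exact sequence $0 \to N \to I \overset{\gamma}{\to} A \to 0$ with $I$ injective.

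The final step is the homological translation of the lifting property. Apply $(\blank, \text{--})$-style reasoning: for an arbitrary injective $J$, the lifting condition says precisely that $(J, \gamma) : (J, I) \to (J, A)$ is surjective. Feeding the short exact sequence $0 \to N \to I \to A \to 0$ into the long exact sequence of $\Hom(J, \blank)$ — noting that $\Ext^1(J, I) = 0$ whenever $I$ is injective is not what we want; rather we use that surjectivity of $(J,\gamma)$ for \emph{all} injectives $J$ is equivalent to $\Ext^1(J, N) = 0$ for all injectives $J$, since $\Ext^1(J, I)=0$ only when $J$ is projective — wait, more carefully: the relevant portion is $(J,I) \to (J,A) \to \Ext^1(J,N) \to \Ext^1(J,I)$, and since it is $(J,\gamma)$ that must be epic for every injective $J$, this forces the connecting map $(J,A)\to\Ext^1(J,N)$ to be zero for every injective $J$, i.e.\ $\Ext^1(\blank, N)$ vanishes on injectives. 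Conversely, given any module $N$ with $\Ext^1(\blank,N)$ vanishing on injectives and any first cosyzygy $0 \to N \to I \to A \to 0$ with $I$ injective, reading the same long exact sequence backwards shows $(J,\gamma)$ is epic for every injective $J$, so $\gamma$ is an approximation and $I(\blank, A)$ is finitely generated. Combining this with the reduction to the cotorsion-free part yields exactly the stated criterion.

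I expect the only real subtlety — and the step most prone to a sign-of-variance error — to be the bookkeeping in the reduction step: verifying that passing from $\gamma$ to $\gamma'$ genuinely preserves the approximation property in \emph{both} directions, and confirming that the relevant ``reject/trace'' here is the injective trace (not some reject), so that the dual of the Bass torsion quotient is correctly identified as $\injc^{-1}(A)$. The homological translation itself is routine once the short exact sequence is in hand; the care is entirely in ensuring that ``maps factoring through injectives, with codomain $A$'' dualizes to ``lifting along $\gamma$'' rather than ``extending along $\gamma$,'' which is why injectives, not projectives, appear on both sides here.
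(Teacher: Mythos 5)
Your proposal follows the paper's own argument essentially verbatim: reduce finite presentation of $(\overline{\blank,A})$ to finite generation of $I(\blank,A)$, characterize the latter by an approximation $\gamma$ whose image is identified with the injective trace $\injc^{-1}(A)$, reduce to the cotorsion-free part so the approximation becomes a short exact sequence $0 \to N \to I \to A \to 0$, and translate the lifting property via the long exact sequence into the vanishing of $\Ext^{1}(\blank,N)$ on injectives. The only blemish is the momentary confusion about $\Ext^{1}(J,I)$ --- one does need $\Ext^{1}(J,I)=0$, which holds for \emph{every} $J$ precisely because $I$ is injective, in order to pass from ``the connecting map is zero'' to ``$\Ext^{1}(J,N)=0$'' --- but your self-correction lands on the right portion of the long exact sequence and the argument goes through.
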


Rephrasing the just proved result, $(\overline{\blank, A})$ is finitely presented if and only if there is an exact sequence
\begin{equation}
 0 \to N \to I \to A \to \injc(A) \to 0,
\end{equation}
where $I$ is injective, the last map is the canonical surjection, and $\Ext^{1}(\blank, N)$ vanishes on injectives. In that case, the epi-mono factorization of the approximation is given by 
$I \to  \injc^{-1}(A) \to A$.
The exact sequence above gives rise to an exact sequence of functors
\[
0 \to (\blank, N) \to (\blank, I) \to (\blank, A) \to (\overline{\blank, A}) \to 0
\]
and also shows that the defect $v(\overline{\blank, A})$ of $(\overline{\blank, A})$ is isomorphic to $\injc(A)$.
\medskip

We now specialize the case when $\Lambda$ is left hereditary. Then the image of $I \to A$ must be injective and we can replace $I$ with that image and set $N := 0$. This proves

\begin{theorem}\label{T:her-contra-hom}
 Suppose $\Lambda$ is left hereditary. Then $(\overline{\blank, A})$ is finitely presented if and only if $\injc^{-1}(A)$ is injective. In that case, 
$A \simeq \injc(A) \coprod \injc^{-1}(A)$, $\injc(A)$ is cotorsion (i.e., the canonical epimorphism $\injc(A) \to \injc^{2}(A)$ is an isomorphism), and $(\overline{\blank, A}) = (\blank, \injc(A))$. \qed
\end{theorem}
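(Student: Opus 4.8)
The plan is to deduce Theorem~\ref{T:her-contra-hom} from Theorem~\ref{T:contra-fp} together with the functorial exact sequence recorded immediately after it, the extra ingredient being the standard fact that over a left hereditary ring every quotient of an injective left module is injective. Indeed, if $I$ is injective and $K \subseteq I$, then applying the long exact sequence for $\Ext(X, \blank)$ to $0 \to K \to I \to I/K \to 0$ gives $\Ext^{1}(X, I/K) = 0$ for every module $X$, because $\Ext^{1}(X, I) = 0$ and $\Ext^{2}(X, K) = 0$ (the left global dimension of $\Lambda$ being at most $1$); so $I/K$ is injective, and in particular every first cosyzygy module is injective.

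For the equivalence: if $(\overline{\blank, A})$ is finitely presented, then by Theorem~\ref{T:contra-fp} the submodule $\injc^{-1}(A)$ is a first cosyzygy of some module $N$, hence fits into a short exact sequence $0 \to N \to I \to \injc^{-1}(A) \to 0$ with $I$ injective, so, being a quotient of $I$, it is injective. Conversely, if $\injc^{-1}(A)$ is injective, it is (isomorphic to) a first cosyzygy of $N := 0$, and $\Ext^{1}(\blank, 0)$ trivially vanishes on injectives, so Theorem~\ref{T:contra-fp} yields that $(\overline{\blank, A})$ is finitely presented.

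Assume now $(\overline{\blank, A})$ is finitely presented, so $\injc^{-1}(A)$ is injective, and the canonical short exact sequence $0 \to \injc^{-1}(A) \to A \to \injc(A) \to 0$ splits; this gives $A \simeq \injc^{-1}(A) \coprod \injc(A)$. To see that $\injc(A)$ is cotorsion, let $\phi : J \to \injc(A)$ be any map out of an injective $J$; composing $\phi$ with a splitting $\injc(A) \hookrightarrow A$ produces a map $J \to A$ whose image lies in $Tr(\mathscr{I}, A) = \injc^{-1}(A)$, yet it also lies in the complementary summand (isomorphic to $\injc(A)$), hence is $0$; as the splitting is monic, $\phi = 0$. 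Thus $Tr(\mathscr{I}, \injc(A)) = \injc^{-1}(\injc(A)) = 0$, i.e.\ the canonical epimorphism $\injc(A) \to \injc^{2}(A)$ is an isomorphism and $\injc(A)$ is cotorsion. Finally, specializing the exact sequence $0 \to N \to I \to A \to \injc(A) \to 0$ from the rephrasing of Theorem~\ref{T:contra-fp} to $N := 0$, $I := \injc^{-1}(A)$ (legitimate since $\injc^{-1}(A)$ is injective), the associated functor sequence becomes $0 \to (\blank, \injc^{-1}(A)) \to (\blank, A) \to (\overline{\blank, A}) \to 0$, exhibiting $(\overline{\blank, A})$ as the cokernel of the map $(\blank, \injc^{-1}(A)) \to (\blank, A)$ induced by the inclusion; applying $(\blank, \blank)$ to the split sequence above shows this same cokernel is $(\blank, \injc(A))$, so $(\overline{\blank, A}) = (\blank, \injc(A))$.

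The argument is formally dual to that of Theorem~\ref{T:her-cov-hom}, and I do not anticipate a genuine difficulty; the only two non-bookkeeping inputs are the homological characterization of left heredity (quotients of injectives are injective) and the short direct-summand computation showing $\injc(A)$ is cotorsion, and it is the latter that I expect to be the one step needing a real, if tiny, argument.
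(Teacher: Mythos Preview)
Your proof is correct and follows essentially the same approach as the paper: use Theorem~\ref{T:contra-fp}, invoke the hereditary hypothesis to conclude that the image $\injc^{-1}(A)$ of the approximation (a quotient of an injective) is itself injective, and then specialize to $N=0$, $I=\injc^{-1}(A)$. The paper leaves the ``in that case'' assertions implicit, whereas you spell out the splitting, the cotorsion argument for $\injc(A)$, and the identification $(\overline{\blank, A}) = (\blank, \injc(A))$; your cotorsion argument is in fact the very one the paper uses in the proposition immediately following this theorem.
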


We can now characterize left hereditary rings over which all functors 
$(\overline{\blank, A})$ are finitely presented. 

\begin{proposition}
 Assume that $\Lambda$ is left hereditary. Then $(\overline{\blank, A})$ is finitely presented for all left modules $A$ if and only if $\Lambda$ is left noetherian. 
\end{proposition}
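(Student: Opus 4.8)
The plan is to use Theorem~\ref{T:her-contra-hom}, which under the standing left-hereditary hypothesis identifies ``$(\overline{\blank, A})$ is finitely presented'' with ``$\injc^{-1}(A) = Tr(\mathscr{I}, A)$ is injective'', and then to pit this against the classical theorem of Bass and Papp, according to which a ring is left noetherian precisely when arbitrary direct sums of injective left modules are injective.

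For the forward implication (finite presentation for all $A$ implies left noetherian), I would start from an arbitrary set-indexed family $\{E_i\}_{i \in J}$ of injective left modules and set $A := \coprod_{i \in J} E_i$. Since each canonical inclusion $E_i \hookrightarrow A$ has image inside the injective trace, we get $Tr(\mathscr{I}, A) = A$, i.e., $\injc^{-1}(A) = A$. Theorem~\ref{T:her-contra-hom} together with the hypothesis then makes $\injc^{-1}(A) = A$ injective. As the family was arbitrary, every direct sum of injective left modules is injective, and Bass--Papp forces $\Lambda$ to be left noetherian.

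For the converse, assume $\Lambda$ is left noetherian (and, by the standing hypothesis, left hereditary), fix a left module $A$, and aim to show that $T := \injc^{-1}(A) = Tr(\mathscr{I}, A)$ is injective; Theorem~\ref{T:her-contra-hom} then delivers the conclusion. I would pick a generating set $S$ of the submodule $T \subseteq A$ with $|S| \le |A| + \aleph_0$. Each $s \in S$ is a finite sum of elements of images of maps from injectives into $A$, hence lies in the image of a single map $f_s : I_s \to A$ with $I_s$ injective (take $I_s$ to be a finite direct sum of the injectives needed to express $s$). Then $I := \coprod_{s \in S} I_s$ is injective by Bass--Papp, and the induced map $I \to A$ has image a submodule of $A$ containing $S$, so its image is exactly $T$. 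Thus $T$ is a quotient of the injective module $I$, and since $\Lambda$ is left hereditary every quotient of an injective is injective; hence $T$ is injective.

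The one point I expect to need genuine care is set-theoretic: a priori $Tr(\mathscr{I}, A)$ is described as the sum of the images of a proper class of maps from injectives into $A$, and one must replace this by an honest presentation of $T$ as a quotient of a single injective module — which is exactly what the cardinality bookkeeping in the converse accomplishes. Beyond that, the argument only invokes two standard facts, the Bass--Papp characterization of left noetherian rings and the characterization of left hereditary rings by closure of injectives under quotients (the latter is already used, implicitly, in Theorem~\ref{T:her-contra-hom}).
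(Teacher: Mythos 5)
Your proposal is correct and follows essentially the same route as the paper: the forward direction tests the criterion of Theorem~\ref{T:her-contra-hom} on direct sums of injectives (which are their own injective trace) and invokes Bass--Papp, while the converse realizes $\injc^{-1}(A)$ as a quotient of a set-indexed direct sum of injectives, which is injective by noetherianness, and then uses that quotients of injectives over a left hereditary ring are injective. Your explicit cardinality bookkeeping for writing the trace as the image of a single (set-indexed) direct sum is a point the paper passes over silently, but it is the standard argument and not a different method.
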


\begin{proof}
 Assume that $I(\blank, A)$ is finitely generated for all left modules $A$. Since any direct sum of injectives is cotorsion-free (i.e., it coincides with its own injective trace), it is a first cosyzygy module in an injective resolution by Theorem~\ref{T:contra-fp}, and is therefore injective. It is well-known that this implies that $\Lambda$ is left noetherian. 
 
 Conversely, suppose $\Lambda$ is left noetherian and let $A$ be an arbitrary module. As $\injc^{-1}(A)$ is cotorsion-free, it is a homomorphic image of a direct sum of injectives. The latter is injective by the noetherian assumption. Since $\Lambda$ is assumed hereditary, $\injc^{-1}(A)$ is injective and we have a direct sum decomposition $A \simeq \injc(A) \coprod \injc^{-1}(A)$. In general, it need not be true that the cotorsion quotient module of a module is cotorsion, but it is true for $\injc(A)$ because of the direct sum decomposition -- no injective can be mapped nontrivially into $\injc(A)$ because otherwise the cotorsion-free submodule of $A$ would be strictly bigger than 
$\injc^{-1}(A)$. The desired result now follows from  Theorem~\ref{T:her-contra-hom}.
\end{proof}

\begin{remark}
The just proved proposition shows that the converse to Proposition~\ref{P:qF-bis} is not true.
\end{remark}

\section{The sub-stabilization of the tensor product}\label{S:sub-ot}

In this section we shall give a sufficient condition for the sub-stabilization of the tensor product to be finitely presented.\footnote{The quot-stabilization of the tensor product is zero since the tensor product preserves epimorphisms.}

First, we provide a short proof of the known result that the defect of a finitely presented functor $F$ represents the zeroth right-derived functor of $F$.

\begin{lemma}
If $F$ is a finitely presented functor, then $R^{0}F \simeq \big(w(F), \blank\big)$.
\end{lemma}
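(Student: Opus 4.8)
The plan is to compute $R^{0}F$ via injective coresolutions in $\LMod$ and to observe that the result only depends on the values of $F$ on injective modules. Fix a presentation $(Y,\blank)\overset{(f,\blankd)}\lra(X,\blank)\lra F\lra 0$, so that $w(F)=\Ker f$ for the underlying map $f:X\to Y$. For a module $A$, choose an injective coresolution $0\to A\to I^{0}\overset{d}\to I^{1}$; by definition $R^{0}F(A)=\Ker\big(F(I^{0})\overset{F(d)}\lra F(I^{1})\big)$, and this assignment, together with its functoriality in $A$ (comparison of coresolutions, homotopies between lifts), involves $F$ only on injective modules and on maps between them. Hence $R^{0}F$, as a functor $\LMod\to\ab$, is determined by the restriction of $F$ to the full subcategory $\mathscr{I}$ of injectives, and it suffices to identify $F|_{\mathscr{I}}$ with $(w(F),\blank)|_{\mathscr{I}}$ and then invoke left-exactness of representables.

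First I would prove the key isomorphism: for every injective $I$ one has $F(I)\simeq (w(F),I)$, naturally in $I$. Indeed, the sequence $\Ker f \hookrightarrow X \overset{f}\lra Y$ is exact at $\Ker f$ and at $X$, and since $I$ is injective the contravariant functor $(\blank,I)$ is exact; applying it yields an exact sequence $(Y,I)\overset{(f,I)}\lra (X,I)\overset{(\iota,I)}\lra(\Ker f,I)\to 0$, where $\iota:\Ker f\hookrightarrow X$ is the inclusion. Evaluating the presentation of $F$ at $I$ gives $F(I)=\Cok\big((f,I)\big)$, so restriction along $\iota$ induces an isomorphism $F(I)\overset{\sim}\lra(\Ker f, I)=(w(F),I)$. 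Because $\iota$ is fixed, this isomorphism is natural in $I$; in fact it is the component at $I$ of a natural transformation $F\to(w(F),\blank)$ coming from the Yoneda map $(\iota,\blank):(X,\blank)\to(w(F),\blank)$, which annihilates the image of $(f,\blank)$ (since $f\iota=0$) and therefore factors through $F=\Cok\big((f,\blank)\big)$.

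With this in hand, $R^{0}F$ and $R^{0}\big((w(F),\blank)\big)$ are computed by the same kernel along any injective coresolution, so $R^{0}F\simeq R^{0}\big((w(F),\blank)\big)$ as functors $\LMod\to\ab$; and since the representable functor $(w(F),\blank)$ is left exact, it equals its own zeroth right-derived functor, giving $R^{0}F\simeq(w(F),\blank)$. The only point requiring care — and the closest thing to an obstacle — is the routine verification that $R^{0}$ really is insensitive to $F$ outside $\mathscr{I}$ (independence of the chosen coresolution and compatibility of the comparison maps with the isomorphism of the previous paragraph), but this is the standard derived-functor bookkeeping and uses no new input beyond the exactness of $(\blank,I)$ for injective $I$.
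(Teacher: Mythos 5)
Your proof is correct and follows essentially the same route as the paper's: both construct the canonical natural transformation $F \to \big(w(F), \blank\big)$ induced by the inclusion $\Ker f \hookrightarrow X$ via Yoneda, verify it is an isomorphism on injectives (your verification using exactness of $(\blank, I)$ for injective $I$ is the detail the paper leaves implicit), and conclude from the left-exactness of the representable codomain. The only cosmetic difference is that the paper phrases the last step by saying $\rho_F$ is itself the zeroth right-derived transformation, whereas you unwind $R^{0}$ explicitly through injective coresolutions.
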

\begin{proof}
 Start with a finite presentation
\[
 0 \lra (Z, \blank) \lra (Y,\blank) \overset{(f, \blankd)}\lra (X, \blank) \lra F \lra 0
\]
of $F$ and pass to the corresponding exact sequence 
\[
0 \lra w(F) \lra X \overset{f}\lra Y \lra Z \lra 0.
\] 
Applying the contravariant Yoneda embedding and using the universal property of cokernels, we have a canonical natural transformation  
$\rho_{F} : F \lra \big(w(F), \blank\big)$, which is obviously an isomorphism on injectives. Since its codomain is left-exact, $\rho_{F}$ is the zeroth right-derived transformation and $R^{0}F \simeq \big(w(F), \blank\big)$.
\end{proof}

We now have

\begin{proposition}
 The sub-stabilization of a finitely presented functor is finitely presented.
\end{proposition}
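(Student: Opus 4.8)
The plan is to combine the preceding Lemma (which gives $R^{0}F\simeq(w(F),\blank)$ for a finitely presented functor $F$) with the standard fact that the sub-stabilization of an additive functor $F$ sits in an exact sequence relating $F$ and $R^{0}F$. Recall that the sub-stabilization $\overline{F}$ is, by definition, the largest subfunctor of $F$ vanishing on injectives; equivalently, it is the kernel of the canonical map $F\to R^{0}F$ into the zeroth right-derived functor. Thus for a finitely presented $F$ we obtain an exact sequence of functors
\[
0 \lra \overline{F} \lra F \lra \big(w(F), \blank\big),
\]
and the strategy is to produce a finite presentation of $\overline{F}$ from the finite presentations of the three other functors appearing here.

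First I would take a finite presentation $(Y,\blank)\overset{(f,\blankd)}\lra(X,\blank)\lra F\lra 0$ of $F$ and set $w:=w(F)=\Ker f$. The evaluation/Yoneda argument of the preceding Lemma shows the canonical transformation $F\to(w,\blank)$ fits into a commutative diagram whose top row is the presentation of $F$ and whose relevant maps are all induced by module homomorphisms $w\hookrightarrow X\overset{f}\to Y$. Since $\fp$ is abelian (Proposition on page~2) and kernels in $\fp$ are computed componentwise (Proposition~\ref{P:fp-ker}), it suffices to check that the image of $F\to(w,\blank)$ is finitely presented and that $(w,\blank)$ is finitely presented; then $\overline{F}$, being the kernel of a morphism between finitely presented functors, is automatically finitely presented, because $\fp$ has kernels. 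Now $(w,\blank)$ is representable, hence finitely presented, and the image of $F\to(w,\blank)$ is a quotient of the finitely presented $F$, hence finitely generated, and it is a subfunctor of the representable $(w,\blank)$; one then argues it is finitely presented — for instance, it is the image of the composite $(X,\blank)\twoheadrightarrow F\to(w,\blank)$, i.e. of a morphism between representables $(X,\blank)\to(w,\blank)$ induced by the inclusion $w\hookrightarrow X$, so its image is finitely presented by the standard presentation of images of maps of representables (the map $w\hookrightarrow X$ is monic, so $(w,\blank)\to$ need not be the issue; rather the image is $(X,\blank)/(w,\blank)^{\perp}$-type, and one reads off a presentation from the four-term exact sequence of modules $0\to w\to X\to Y\to Z\to 0$). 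Feeding all of this into $\fp$, $\overline{F}=\Ker\big(F\to(w,\blank)\big)$ is finitely presented.

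The cleanest route, which I would actually write up, avoids case analysis: from the four-term module sequence $0\to w\to X\overset{f}\to Y\to Z\to 0$ attached to the presentation of $F$, split it into short exact sequences and apply the (contravariant) Yoneda embedding to assemble, via Proposition~\ref{P:fp-ker} and the abelianness of $\fp$, an explicit finite presentation of $\overline{F}$ in terms of $(X,\blank)$, $(Y,\blank)$, $(w,\blank)$, and the representable on $\Imr(X\to Y)$. The upshot is that $\overline{F}$ is finitely presented whenever $F$ is, and moreover its defect is $w(\overline{F})=0$ by Lemma~\ref{L:defect-zero}, consistent with $\overline{F}$ vanishing on injectives.

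The main obstacle is the bookkeeping in the last step: one must be careful that the naive ``kernel of $F\to R^0F$'' really is computed componentwise in $\fp$ (which it is, by Proposition~\ref{P:fp-ker}, but this is exactly where the finite-presentation hypothesis on $F$ is used — for a general additive $F$ the sub-stabilization need not be finitely presented), and one must verify that the morphism $F\to(w,\blank)$ used is the canonical one into $R^{0}F$ and not merely some map to a representable, so that its kernel is genuinely the sub-stabilization. Both points follow from the preceding Lemma together with the universal property of $R^{0}$; once they are in hand, the finite presentation of $\overline{F}$ is a formal consequence of $\fp$ being abelian with enough projectives.
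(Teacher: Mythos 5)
Your proposal is correct and follows the paper's own argument: identify $\overline{F}$ as the kernel of the canonical map $F\to R^{0}F\simeq\big(w(F),\blank\big)$, note that the latter is representable hence finitely presented, and conclude via Proposition~\ref{P:fp-ker} that the kernel of a morphism of finitely presented functors is finitely presented. The digression about presenting the image of $F\to(w(F),\blank)$ is unnecessary for this conclusion, but it does no harm.
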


\begin{proof}
 The sub-stabilization of an additive functor is defined by the exact sequence
 \[
 0 \lra \overline{F} \lra F \lra R^{0}F.
 \]
 If $F$ is finitely presented, then $R^{0}F$ is representable, hence finitely presented, and we are done by Proposition~\ref{P:fp-ker}.
\end{proof}

\begin{corollary}
 If $A$ is a finitely presented right $\Lambda$-module, then the sub-stabilization $A \ot \blank$ of $A \otimes \blank$ is finitely presented.
\end{corollary}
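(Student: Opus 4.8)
The plan is to obtain this directly from the proposition just proved, namely that the sub-stabilization of a finitely presented functor is finitely presented. So the only thing to check is that the covariant functor $A \otimes_{\Lambda} \blank : \LMod \to \ab$ is itself finitely presented in the sense of Section~2, i.e., that it is a componentwise cokernel of a natural transformation between representable functors.

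To exhibit such a presentation, I would start from a finite presentation of the right module $A$, say
\[
\Lambda^{m} \overset{g}{\lra} \Lambda^{n} \lra A \lra 0,
\]
with $g$ a homomorphism of right $\Lambda$-modules; this is the only place the hypothesis on $A$ is used. Since $\blank \otimes_{\Lambda} B$ is right exact for every left module $B$, tensoring with $B$ and using the natural isomorphisms $\Lambda^{k} \otimes_{\Lambda} B \cong B^{k}$ yields an exact sequence $B^{m} \to B^{n} \to A \otimes_{\Lambda} B \to 0$, natural in $B$; that is, the sequence of functors
\[
\Lambda^{m} \otimes \blank \lra \Lambda^{n} \otimes \blank \lra A \otimes \blank \lra 0
\]
is componentwise exact. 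Now $\Lambda \otimes_{\Lambda} \blank$ is naturally isomorphic to the forgetful functor $(\Lambda, \blank)$, and since $\Hom$ turns a finite direct sum in its first variable into the corresponding finite product, $\Lambda^{k} \otimes \blank \cong (\Lambda^{k}, \blank)$. By the Yoneda lemma the displayed natural transformation is then of the form $(f, \blankd) : (\Lambda^{m}, \blank) \to (\Lambda^{n}, \blank)$ for a uniquely determined homomorphism $f : \Lambda^{n} \to \Lambda^{m}$ of left modules, so $A \otimes \blank$ is finitely presented, and the proposition just proved finishes the argument.

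I do not anticipate any real obstacle here: the substantive work is the preceding proposition, and the only step that calls for a moment's care is the identification of the functors $\Lambda^{k} \otimes \blank$ with the representables $(\Lambda^{k}, \blank)$ together with the observation that the induced transformation between them is necessarily of Yoneda type. As a bonus one can pin down the sub-stabilization explicitly: the defect $w(A \otimes \blank) = \Ker f$ is, by dualizing the presentation of $A$, the left module $A^{\ast} := \Hom(A, \Lambda)$, so by the lemma above $R^{0}(A \otimes \blank) \simeq (A^{\ast}, \blank)$ and $A \ot \blank$ is the componentwise kernel of the canonical transformation $A \otimes \blank \to (A^{\ast}, \blank)$ sending $a \otimes b$ to $\varphi \mapsto \varphi(a) b$; that this kernel is finitely presented also follows at once from Proposition~\ref{P:fp-ker}.
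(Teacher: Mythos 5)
Your proposal is correct and follows the same route as the paper: the paper's proof is a one-line appeal to the duality for finitely generated projectives to conclude that $A\otimes\blank$ is finitely presented, and then the preceding proposition applies; you simply spell out that duality in the case of free modules $\Lambda^{k}\otimes\blank\cong(\Lambda^{k},\blank)$. Your closing identification of the defect with $A^{\ast}$ also matches the paper's Lemma on $w(A\otimes\blank)$.
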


\begin{proof}
 Indeed, the duality for finitely generated projective modules implies that if~$A$ is finitely presented, then $A \otimes \blank$ is finitely presented, too.\footnote{The converse of this statement is also true: if $A \otimes \blank$ is finitely presented, then so is $A$.}
\end{proof}

\begin{remark}
 The converse of the corollary is not true. To see that, pick an infinite projective $P$. Then $P \otimes \blank$ is an exact functor, and therefore the zeroth right-derived transformation is an isomorphism, forcing the sub-stabilization $P \ot \blank$ to be zero, which is, of course, finitely presented.
\end{remark}

For completeness, we give a finite presentation of $A \ot \blank$ in the case $A$ is finitely presented. For a general finitely presented $F$, see~\cite[p.~453]{MR-1}.

Let $P_{1} \overset{f}\lra P_{0} \lra A \lra 0$ be a finite presentation of $A$ with the $P_{i}$ finitely generated projectives. Applying the tensor ptoduct, we have an exact sequence of functors
\[
P_{1} \otimes \blank \lra P_{0} \otimes \blank \lra A \otimes \blank \lra 0.
\]
Using the duality for finitely generated projectives and the left-exactness of the Hom, we have a projective resolution of $A \otimes \blank$.
\[
0 \lra  (\Tr A, \blank) \lra (P_{1}^{\ast}, \blank) \overset{(f^{\ast}, \blankd)}\lra (P_{0}^{\ast}, \blank) \lra A \otimes \blank \lra 0.
\]
This presentation comes from the exact sequence
\[
0 \lra A^{\ast} \lra P_{0}^{\ast} \overset{f^{\ast}}\lra P_{1}^{\ast} \lra \Tr A \lra 0.
\]
As an immediate consequence, we have 
\begin{lemma}\label{L:A*}
 If $A$ is finitely presented, then $w(A \otimes \blank) \simeq A^{\ast}$.\footnote{Using the extension of defect to arbitrary additive functors~\cite{Mar24}, one can remove the assumption that $A$ be finitely presented.} \qed
\end{lemma}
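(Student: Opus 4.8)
The plan is to obtain the isomorphism by reading it off a projective presentation of $A \otimes \blank$ in $\fp$; indeed the claim is essentially immediate once such a presentation by representables is in hand, so the work is really in setting up that presentation.

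First I would fix a finite presentation $P_{1} \overset{f}\lra P_{0} \lra A \lra 0$ with $P_{0}, P_{1}$ finitely generated projective right modules, and tensor it (in the left variable) to obtain the componentwise exact sequence $P_{1} \otimes \blank \lra P_{0} \otimes \blank \lra A \otimes \blank \lra 0$ of additive functors $\LMod \to \ab$, using that the tensor product is right exact. Next I would invoke the duality for finitely generated projectives, which supplies a natural isomorphism $P \otimes \blank \simeq (P^{\ast}, \blank)$ for every finitely generated projective $P$. Applying it to $P_{0}$ and $P_{1}$ converts the above into
\[
(P_{1}^{\ast}, \blank) \overset{(f^{\ast}, \blankd)}\lra (P_{0}^{\ast}, \blank) \lra A \otimes \blank \lra 0,
\]
a presentation of $A \otimes \blank$ in the sense of~\eqref{Eq:fp} with $X = P_{0}^{\ast}$, $Y = P_{1}^{\ast}$, and structure map $f^{\ast} : P_{0}^{\ast} \to P_{1}^{\ast}$.

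By the definition of defect, $w(A \otimes \blank) = \Ker f^{\ast}$. To evaluate this kernel, apply the left-exact functor $(\blank)^{\ast} = \Hom_{\Lambda}(\blank, \Lambda)$ to $P_{1} \overset{f}\lra P_{0} \lra A \lra 0$, obtaining the exact sequence $0 \lra A^{\ast} \lra P_{0}^{\ast} \overset{f^{\ast}}\lra P_{1}^{\ast}$; hence $\Ker f^{\ast} \simeq A^{\ast}$, which is the assertion.

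The only step requiring genuine care is checking that the duality isomorphism $P \otimes \blank \simeq (P^{\ast}, \blank)$ is natural in $P$, so that it actually carries the map induced by $f$ to $(f^{\ast}, \blankd)$ and the square relating the two presentations commutes; this follows from the naturality of the canonical evaluation pairing and is the one non-formal ingredient, everything else being bookkeeping. Since the defect is a well-defined invariant of a finitely presented functor, the conclusion is independent of the chosen presentation of $A$.
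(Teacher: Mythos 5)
Your proposal is correct and follows the same route as the paper: tensor a finite presentation $P_{1} \to P_{0} \to A \to 0$, convert it via the duality $P \otimes \blank \simeq (P^{\ast}, \blank)$ into the presentation $(P_{1}^{\ast}, \blank) \xrightarrow{(f^{\ast}, \blankd)} (P_{0}^{\ast}, \blank) \to A \otimes \blank \to 0$, and read off $w(A \otimes \blank) = \Ker f^{\ast} \simeq A^{\ast}$ from the left-exactness of $(\blank)^{\ast}$. Your extra remark on the naturality of the duality isomorphism is a sensible point of care that the paper leaves implicit.
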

Furthermore, we have
$\Imr f^{\ast} \simeq \Omega \Tr A$ and, according to~\cite[p.~453]{MR-1}, we have the desired finite presentation of $A \ot \blank$:
\[
0 \lra  (\Tr A, \blank) \lra (P_{1}^{\ast}, \blank) \lra (\Omega \Tr A, \blank) \lra A \ot \blank \lra 0.
\]
We also notice that the long cohomology exact sequence associated with the short exact sequence 
\[
0 \lra \Omega \Tr A \lra P_{1}^{\ast} \lra \Tr A \lra 0
\]
recovers the well-known isomorphism 
\[
A \ot \blank \simeq \Ext^{1}_{\Lambda^{op}}(\Tr A, \blank).
\]
for a finitely presented $A$.
\smallskip

Lemma~\ref{L:A*} has a surprising application. By Lemma~\ref{L:defect-zero}, the defect of a finitely presented functor is zero if and only if it vanishes on injectives. In particular, if $A$ is finitely presented and torsion (i.e., $A^{\ast} = 0$), then $A\otimes \blank$ vanishes on injectives. This observation leads to

\begin{theorem}
Suppose the Jacobson radical $J$ of $\Lambda$ is finitely generated as a right $\Lambda$-module and $\mathrm{Soc}\,\Lambda_{\Lambda}$, i.e., the left annihilator of $J$, is trivial. Then every nonzero injective left $\Lambda$-module is infinitely generated.
\end{theorem}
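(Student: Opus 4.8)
The plan is to apply Lemma~\ref{L:A*} and Lemma~\ref{L:defect-zero} to the single right $\Lambda$-module $A := \Lambda/J$, conclude that $(\Lambda/J)\otimes_{\Lambda}\blank$ annihilates every injective left module, and then finish with Nakayama's lemma.

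First I would check that $A = \Lambda/J$ is finitely presented as a right $\Lambda$-module: by hypothesis $J$ is finitely generated as a right ideal, so choosing a surjection $\Lambda_{\Lambda}^{\,n}\to J$ and composing with the inclusion $J\hookrightarrow\Lambda$ produces a finite presentation $\Lambda_{\Lambda}^{\,n}\to\Lambda_{\Lambda}\to\Lambda/J\to 0$. Next I would compute the dual $A^{\ast}=\Hom_{\Lambda}\bigl((\Lambda/J)_{\Lambda},\Lambda_{\Lambda}\bigr)$: a right $\Lambda$-homomorphism $\Lambda/J\to\Lambda$ is determined by the image $x$ of $1+J$, and it is well defined precisely when $xJ=0$, so $A^{\ast}$ is isomorphic to the left annihilator of $J$, which by the hypothesis (identifying it with $\mathrm{Soc}\,\Lambda_{\Lambda}$) is zero. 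Thus $A$ is finitely presented and (Bass) torsion. By Lemma~\ref{L:A*}, $w(A\otimes\blank)\simeq A^{\ast}=0$, and by Lemma~\ref{L:defect-zero} the functor $A\otimes\blank=(\Lambda/J)\otimes_{\Lambda}\blank$ vanishes on injective left $\Lambda$-modules.

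To finish, let $E$ be an injective left $\Lambda$-module. The canonical isomorphism $(\Lambda/J)\otimes_{\Lambda}E\cong E/JE$, together with the vanishing just established, gives $JE=E$; if $E$ were finitely generated, Nakayama's lemma (valid for the Jacobson radical of an arbitrary ring) would force $E=0$, so every nonzero injective left $\Lambda$-module is infinitely generated. I do not expect a serious obstacle: once Lemma~\ref{L:A*} is in hand the argument is essentially a two-line computation, and the only points needing care are the side bookkeeping --- in particular that the dual of the right module $\Lambda/J$ is the \emph{left} annihilator of $J$, and the standard identification $(\Lambda/J)\otimes_\Lambda E\cong E/JE$ --- together with the invocation of the noncommutative Nakayama lemma.
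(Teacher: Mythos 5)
Your proposal is correct and follows exactly the paper's route: take $A=\Lambda_{\Lambda}/J$, verify it is finitely presented with $A^{\ast}=0$ (the left annihilator of $J$), conclude via Lemma~\ref{L:A*} and Lemma~\ref{L:defect-zero} that $A\otimes\blank$ kills injectives, and finish with $(\Lambda/J)\otimes_{\Lambda}E\cong E/JE$ and Nakayama. You merely spell out the verifications that the paper compresses into ``the assumptions on $A$ show that $A$ is finitely presented and $A^{\ast}=0$.''
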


\begin{proof}
 Set $A := \Lambda_{\Lambda}/J$. The assumptions on $A$ show that $A$ is finitely presented and $A^{\ast} = 0$. Thus 
 $\Lambda_{\Lambda}/J \otimes \blank$ vanishes on injectives. The desired result now follows from Nakayama's lemma. 
\end{proof}

\begin{corollary}
 Suppose $R$ is a noetherian commutative local ring of depth at least one. Then every nonzero injective $R$-module is infinitely generated. \qed
\end{corollary}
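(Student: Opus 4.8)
The plan is to obtain this corollary as an immediate application of the preceding theorem, taking $\Lambda = R$. Since $R$ is commutative there is no distinction between left and right modules, and the Jacobson radical $J$ is the unique maximal ideal $\mathfrak{m}$.

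First I would verify the two hypotheses of the theorem. That $J$ is finitely generated as a (right) $\Lambda$-module is immediate from the noetherian assumption, since then $\mathfrak{m}$ is a finitely generated ideal. The hypothesis that $\mathrm{Soc}\,\Lambda_{\Lambda}$ --- here the annihilator of $J$ in $R$ --- is trivial is the only substantive point. For a commutative local ring this socle is the ideal $(0 :_{R} \mathfrak{m}) \cong \Hom_{R}(R/\mathfrak{m}, R)$, and its vanishing is exactly the assertion $\mathrm{depth}\, R \geq 1$. Concretely: depth at least one means $\mathfrak{m}$ contains a non-zero-divisor $t$, and if $x$ lies in the socle then $tx = 0$, forcing $x = 0$. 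Equivalently, a nonzero socle element would furnish an embedding $R/\mathfrak{m} \hookrightarrow R$, that is, $\mathfrak{m} \in \mathrm{Ass}\, R$, contradicting positive depth.

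With both hypotheses in force, the theorem yields that every nonzero injective $R$-module is infinitely generated, which is the claim. I do not expect any real obstacle here: the whole content is the elementary translation between the homological condition $\mathrm{depth}\, R \geq 1$ and the ring-theoretic condition that the socle of $R$ vanishes, together with the noetherian hypothesis ensuring finite generation of $\mathfrak{m}$.
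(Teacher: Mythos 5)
Your proposal is correct and is exactly the argument the paper intends: the corollary is stated with a \qed because it follows immediately from the preceding theorem once one notes that for a commutative noetherian local ring the maximal ideal is finitely generated and that $\mathrm{depth}\,R \geq 1$ is equivalent to the vanishing of $(0:_R \mathfrak{m}) \cong \Hom_R(R/\mathfrak{m},R)$. Your verification of these two hypotheses is accurate and complete.
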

\smallskip

Closely related to the injective stabilization of the tensor product is the torsion radical $\injt$ introduced in~\cite{MR-2}. For a right 
$\Lambda$-module $A$ its torsion $\injt(A)$ is defined as $A \ot \Lambda$, i.e., the value of the sub-stabilization $A \ot \blank$ at the regular left $\Lambda$-module. In short, $\injt = \blank \ot \Lambda$.
An alternative description of $\injt$ is provided by the following construction. Embed $\prescript{}{\Lambda}\Lambda$ in an injective $I$, say, $\iota : \Lambda \lra I$, apply $A \otimes \blank$, and take the kernel. Thus $\injt$ is defined by the exact sequence
\[
0 \lra \injt \lra  \blank \otimes \Lambda \overset{\blank \otimes \iota}\lra \blank \otimes I.
\]
Since tensoring with a projective is an exact functor, $\injt$ is projectively stable (i.e., vanishes on projectives). The defining sequence now yields
\begin{proposition}
 If the injective envelope of $\prescript{}{\Lambda}\Lambda$ is finitely presented, then $\injt$ is finitely presented. In particular, this is true when $\Lambda$ is an artin algebra. \qed
\end{proposition}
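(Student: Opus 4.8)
The plan is to read off a finite presentation of $\injt$ straight from its defining exact sequence
\[
0 \lra \injt \lra \blank \otimes \Lambda \overset{\blank \otimes \iota}\lra \blank \otimes I ,
\]
in which $\iota \colon \prescript{}{\Lambda}\Lambda \to I$ is a monomorphism into an injective that we are free to choose to be the injective envelope. The idea is that $\injt$ appears here as the componentwise kernel of a natural transformation between two finitely presented functors of the right-module variable, hence is itself finitely presented by the right-module version of Proposition~\ref{P:fp-ker}.

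To make this precise I would first note that $\blank \otimes_{\Lambda} \Lambda$ is the identity functor on $\mathrm{Mod}\mhyphen\Lambda$, represented by $\Lambda_{\Lambda}$ and so finitely presented. Then, under the hypothesis that $I$ is finitely presented as a left $\Lambda$-module, I would argue that $\blank \otimes I$ is a finitely presented functor of the right-module variable. This is the mirror image of the fact invoked in the proof of the corollary above, that $A \otimes \blank$ is finitely presented whenever $A$ is (apply that fact over $\Lambda^{\mathrm{op}}$): choose a finite presentation $Q_{1} \to Q_{0} \to I \to 0$ of $I$ by finitely generated projective left modules, tensor on the left to obtain the right-exact sequence $\blank \otimes Q_{1} \to \blank \otimes Q_{0} \to \blank \otimes I \to 0$, and invoke the duality for finitely generated projectives to rewrite $\blank \otimes Q_{i} \simeq (Q_{i}^{\ast}, \blank)$, with $Q_{i}^{\ast} = \Hom_{\Lambda}(Q_{i}, \Lambda)$ a finitely generated projective right module. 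This displays $\blank \otimes I$ as a cokernel of a map of representables. Consequently $\blank \otimes \iota$ is a morphism in the category of finitely presented functors of the right-module variable, and since that category is closed under taking kernels, which are computed componentwise (Proposition~\ref{P:fp-ker} applied on the right), and the componentwise kernel in question is exactly $\injt$, the first assertion follows.

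For the ``in particular'' clause I would recall that an artin algebra $\Lambda$ is two-sided noetherian, so it is enough to see that the injective envelope $I$ of $\prescript{}{\Lambda}\Lambda$ is finitely generated: $\prescript{}{\Lambda}\Lambda$ has finite length, hence essential socle, so $I = E(\mathrm{soc}\,\prescript{}{\Lambda}\Lambda)$ is a finite direct sum of injective envelopes of simple left modules, each of which is finitely generated over an artin algebra (being $D$ of an indecomposable projective right module under the standard duality, hence of finite length). Finitely generated over the noetherian ring $\Lambda$, the module $I$ is finitely presented, and the first part applies. The argument is formal throughout; the only things that require any care are the bookkeeping of sides — it is essential that $\blank \otimes I$ be finitely presented \emph{as a functor of the right-module variable}, which is exactly why the hypothesis is phrased in terms of $I$ as a left module — and the standard but non-formal input that injective envelopes of finitely generated modules over an artin algebra stay finitely generated.
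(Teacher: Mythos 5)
Your proposal is correct and follows the same route the paper intends: the displayed defining sequence exhibits $\injt$ as the componentwise kernel of a map between the representable functor $\blank\otimes\Lambda$ and the finitely presented functor $\blank\otimes I$, so Proposition~\ref{P:fp-ker} applies; your side-of-the-ring bookkeeping and the artin-algebra verification are exactly the details the paper leaves tacit.
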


\end{document}